\numberwithin{equation}{section}
\newtheorem{lemma}{Lemma}
\begin{document}
\author{Ajai Choudhry}
\title{A new method of solving quartic and \\
higher degree diophantine equations}
\date{}
\maketitle

\frenchspacing

\begin{abstract} 
In this  paper we present a new method of solving certain quartic and higher degree homogeneous polynomial diophantine  equations  in four variables.  The method can also  be  extended  to solve  simultaneous  homogeneous polynomial diophantine equations, in five or more variables, with one of the equations being of degree $\geq 4$. We show that, under certain conditions,  the method yields an arbitrarily large number of integer solutions of such diophantine equations and  diophantine systems,   some examples being a sextic equation in four variables, a tenth degree   equation in six variables, and two simultaneous equations of degrees four and six in  six variables. The method of solving these homogeneous equations also simultaneously yields  arbitrarily many rational solutions of  certain related  nonhomogeneous equations of high degree. In contrast to existing methods, we obtain the arbitrarily  large number of  solutions without finding a parametric solution of the equations under consideration  and without relating the solutions to rational points on an elliptic curve of positive rank. It appears from   the examples  given in the paper  that there may exist projective varieties on which there are an arbitrarily large number of integer points and on which  a curve of genus 0  or 1 does not exist.  
\end{abstract}

Mathematics Subject Classification 2010: 11D25, 11D41, 14G05.

Keywords: quartic diophantine equations; higher degree diophantine \\
\hspace*{1.0in} equations; rational points; projective varieties; \\
\hspace*{1.0in} composition of forms.

\section{Introduction}\label{Intro}

This paper is  concerned with quartic and higher degree homogeneous polynomial diophantine equations of the type,
\begin{equation}
f(x_1,\,x_2,\,x_3,\,x_4)=0, 
\label{gendeq}
\end{equation}
where $f(x_1,\,x_2,\,x_3,\,x_4)$ is a form, with integer coefficients, in the 4   variables $x_1,\,\,x_2,\,x_3,x_4$, as well as with simultaneous diophantine equations  of the type,
\begin{equation}
f_r(x_1,\,x_2,\,\ldots,\,x_{n})=0,\;r=1,\,2,\,\ldots,\,k,\label{vsysgen0} \\
\end{equation}
where $f_r(x_i)$ are forms, with integer coefficients,  in the $n$ independent variables $x_1,\,x_2,\,\ldots,\,x_{n}$ where $n \geq 5$,    $k < n-2$ and the degree of at least one of the forms $f_r(x_i)$ is $\geq 4$.

If there exist infinitely many integer solutions of Eq.~\eqref{gendeq} or of the diophantine system \eqref{vsysgen0},   till now there are primarily  just two ways  of obtaining  these infinitely many  solutions: either we find a  solution in terms of one or more independent, arbitrary integer parameters, or we find a solution in terms of two parameters $X$ and $Y$ where the ordered pair $(X,\,Y)$ represents a rational point on an elliptic curve defined over $\mathbb{Q}$, and such that the elliptic curve  has rank $\geq 1$. Since the elliptic curve has positive rank, we can find infinitely many rational points on the elliptic curve  by using the group law, and thus obtain  infinitely many rational solutions of Eq.~\eqref{gendeq} or of the diophantine system \eqref{vsysgen0}. On appropriate scaling, these infinitely many rational solutions yield infinitely many primitive integer solutions of Eq.\eqref{gendeq} or of the simultaneous equations \eqref{vsysgen0}.

In this paper we first show that when Eq.~\eqref{gendeq} satisfies certain properties, we may  obtain  a sequence of integer solutions of quartic and higher degree equations of type \eqref{gendeq} by a new iterative method. In contrast to existing methods, the method described in this paper   neither involves finding a parametric solution nor does it require relating the integer solutions to rational points on an elliptic curve of positive rank. In fact, we show how to construct equations of arbitrarily high degree with the desired properties, and give examples of equations for which we obtain an infinite sequence of integer solutions.   While the  octic and higher degree equations, that we could solve by the new method, are such that there is always  a curve of genus 0 or 1 on the surface defined by the equation under consideration, we show that there exist quartic and sextic equations of type \eqref{gendeq} with infinitely many integer solutions  and for which it seems that there is no particular reason that there should necessarily exist a curve of genus 0 or 1 on the surface defined by the equation.

As in the case of the single diophantine equation \eqref{gendeq}, when the diophantine system \eqref{vsysgen0} possesses certain properties, an extension of the new method may be applied to obtain a sequence of integer solutions of \eqref{vsysgen0}. We show how to construct such diophantine systems in which one of  the  equations is of arbitrarily high degree. This leads to an example of a diophantine equation of arbitrarily high degree $2d$ in $2d$ variables for which we obtain an arbitrarily long sequence of integer solutions. While this sequence is expected to consist of infinitely many distinct integer solutions, this remains to be proved.  

We give a couple of  illustrative examples of  diophantine systems for which an arbitrarily large number of  integer solutions may be obtained by the method described in this paper. For instance,   we  obtain arbitrarily many solutions of   a diophantine system consisting of one equation of degree 10 in 8 variables and two linear equations.  This leads to an example of a single diophantine equation of degree 10 in 6 variables as well as to an example of a pair of simultaneous equations of degrees four and six in 6 variables, and for both of these examples, we obtain an arbitrarily large number of  integer solutions.  It appears that  there exist  diophantine systems  with  an arbitrarily large number of  integer solutions and for which it is unlikely that there exists  a curve of genus 0 or 1  on the projective variety  defined by the simultaneous equations comprising the diophantine system. 

As a related result, when we obtain an arbitrarily large number of  integer solutions of an equation of type \eqref{gendeq} or a diophantine system of type \eqref{vsysgen0}, we  also simultaneously get a nonhomogeneous equation of high degree with an arbitrarily large number of  rational solutions.

In Section 2 we give an overview of the new method  of solving diophantine equations of type \eqref{gendeq}. In Sections 3, 4 and 5, we show how to construct  examples of quartic, sextic and higher degree equations that may be solved by this method. In Section 6 we give an overview of an extension of the  method  for solving simultaneous diophantine equations in several variables, and show how to construct  diophantine systems that may be solved by this method.  In Sections 7 and 8, we give examples of such diophantine systems with  an arbitrarily large number of  integer solutions. We conclude the paper with some open problems and remarks regarding such diophantine equations.  

\section{An overview of the new method}\label{newmethod}

In this section we describe, in general terms, the process of generating a sequence of integer solutions of certain quartic and higher  degree homogeneous diophantine equations  in  four variables. 

\subsection{}\label{properties}

We now consider the  diophantine equation,
\begin{equation}
f(x_1,\,x_2,\,x_3,\,x_4)=0. \label{gendeq4var}
\end{equation}
where $f(x_1,\,x_2,\,x_3,\,x_4)$ is a  quaternary form of degree $d \geq 4$ in four independent variables $x_1,\,x_2,\,x_3,\,x_4$.  Since Eq.~\eqref{gendeq4var} is homogeneous, if $(\alpha_1,\,\alpha_2,\,\alpha_3,\,$ $\alpha_4) $
is any rational solution of \eqref{gendeq4var}, then  $(k\alpha_1,\,k\alpha_2,\,k\alpha_3,$ $k\alpha_4), \; k \in \mathbb{Q} \setminus \{0\}$ is also a rational solution of \eqref{gendeq4var}. All such solutions will be considered equivalent, and they all represent the same point on the surface \eqref{gendeq4var}. When we refer to more than one  solution of \eqref{gendeq4var}, we mean solutions that are not equivalent to each other. It is clear that any rational solution of Eq.~\eqref{gendeq4var} yields, on appropriate scaling, a solution  in integers. Thus, it suffices to find rational solutions of \eqref{gendeq4var}. 

If there exist integers $\alpha_1,\,\alpha_2,\,\alpha_3,\,\alpha_4$, not all 0,  such that $f(\alpha_i)=0$,  we will refer to the quadruple $(\alpha_1,\,\alpha_2,\,\alpha_3,\,\alpha_4)$ both  as a solution of Eq.~\eqref{gendeq4var} and  as a point on the surface defined by Eq.~\eqref{gendeq4var}.

We assume that Eq.~\eqref{gendeq4var} satisfies the following properties:

\noindent ${\bf D_1:}$ At least one integer solution $(\alpha_1,\,\alpha_2,\,\alpha_3,\,\alpha_4)$ of the diophantine equation \eqref{gendeq4var} is known such that both $\alpha_1, \,\alpha_2$ are not simultaneously $0$ and similarly,  both $\alpha_3, \,\alpha_4$ are not simultaneously $0$.

\noindent  ${\bf D_2:}$ The  form $f(x_i)$ satisfies the condition $f(x_1,\,x_2,\,x_3,\,x_4)$ $=f(-x_1,\,x_2,\,x_3,$ $x_4)$. 

\noindent ${\bf D_3:}$ There exist bilinear forms $B_1(m_1,\,m_2,\,x_1,\,x_2)$ and $B_2(m_1,\,m_2,\,x_1,\,x_2)$  with rational coefficients satisfying the following conditions:

(i) for all rational numerical values of $x_1$ and $x_2$ such that $(x_1,\,x_2) \neq (0,\,0)$, the  forms $B_j(m_1,\,m_2,\,x_1,\,x_2),\;j=1,\,2$, are linearly independent linear forms in the variables $m_1,\,m_2$; and for all rational numerical values of $m_1$ and $m_2$ such that $(m_1,\,m_2) \neq (0,\,0)$, the  forms $B_j(m_1,\,m_2,\,x_1,\,x_2),\;j=1,\,2$, are linearly independent linear forms in the variables $x_1,\,x_2$;

(ii) on substituting 
\begin{equation}
\begin{aligned}
x_3=&B_1(m_1,\,m_2,\,x_1,\,x_2),\\
x_4=&B_2(m_1,\,m_2,\,x_1,\,x_2), 
\end{aligned}
\label{subs1}
\end{equation}
in Eq.~\eqref{gendeq4var}, we get
\begin{equation}
\psi(x_1,\,x_2)\{\phi_0(m_1,\,m_2)x_1^2+\phi_1(m_1,\,m_2)x_1x_2+\phi_2(m_1,\,m_2)x_2^2\}=0,
\label{deqsubs1}
\end{equation}
where $\psi(x_1,\,x_2)$ is a  binary  form of degree $d-2$ in the variables $x_1,\,x_2$ such that the equation $\psi(x_1,\,x_2)=0 $ has no rational solutions, and $\phi_0(m_1,\,m_2)$,  $\phi_1(m_1,\,m_2),\,\phi_2(m_1,\,m_2)$ are polynomials  that do not vanish simultaneously for any rational values of $m_1$ and $m_2$ such that  $(m_1,\,m_2) \neq (0,\,0)$.

We will show in Section 2.2 that when Eq.~\eqref{gendeq4var} satisfies the above three properties, it is possible  to generate a sequence of rational solutions of Eq.~\eqref{gendeq4var} starting from the known solution.

We note that it is fairly straightforward to determine all those  solutions of Eq.~\eqref{gendeq4var} in which both $x_1,\,x_2$ are 0 or both $x_3,\,x_4$ are 0 since in these cases, Eq.~\eqref{gendeq4var} reduces to a homogeneous equation in just two variables. We will exclude all such solutions of Eq.~\eqref{gendeq4var} since the method described below is based on the substitution given by \eqref{subs1} and if both $x_1,\,x_2$ are 0, we  get the trivial result  $x_3=0,\,x_4=0$, and similarly,  a trivial situation also arises from any solution in which both $x_3,\,x_4$ are 0. Thus, henceforth, whenever we refer to a solution of Eq.~\eqref{gendeq4var} or, a point on the surface defined by Eq.~\eqref{gendeq4var}, we mean a solution (or a point) for which  both $x_1,\,x_2$ are not simultaneously 0 and also  both $x_3,\,x_4$ are not simultaneously 0.

\subsection{}

Given any   rational point  $P=(\alpha_1,\,\alpha_2,\,\alpha_3,\,\alpha_4)$ on a surface of type \eqref{gendeq4var} which satisfies the properties ${\bf D_1}$, ${\bf D_2}$, ${\bf D_3}$,  mentioned in Section 2.1 above, we can, in general,  find two new rational points on the surface  \eqref{gendeq4var}. The first  of these is simply the reflection of $P$, denoted by $R(P)$ and defined by $R(P)=(-\alpha_1,\,\alpha_2,\,\alpha_3,\, \alpha_4)$.  In view of  property ${\bf D_2}$, the point $R(P)$ lies on the surface \eqref{gendeq4var}.  A second rational point on the surface  \eqref{gendeq4var} may be obtained as described below.
 
We note that corresponding to each  solution $(x_1,\,x_2,\,x_3,\,x_4)=(\alpha_1,\,\alpha_2,\,$ $\alpha_3,\, \alpha_4)$ of Eq.~\eqref{gendeq4var}, we can find rational numerical values of the parameters $m_1,\,m_2$ by solving  the two linear relations,

\begin{equation}
\begin{aligned}
\alpha_3=&B_1(m_1,\,m_2,\,\alpha_1,\,\alpha_2),\\
\alpha_4=&B_2(m_1,\,m_2,\,\alpha_1,\,\alpha_2), 
\end{aligned}
\label{relm1m2}
\end{equation}
 obtained by writing $x_i=\alpha_i$ in the relations \eqref{subs1}. We also note that all equivalent solutions $(k\alpha_1,\,k\alpha_2,\,k\alpha_3,\,k\alpha_4)$ yield  the same  pair of values for $m_1$ and $m_2$ which are such that $(m_1,\,m_2) \neq (0,\,0)$. With these values of $m_1,\,m_2$, we make the substitution \eqref{subs1} in Eq.~\eqref{gendeq4var} and reduce it to Eq.~\eqref{deqsubs1}. Since $\psi(x_1,\,x_2) \neq 0$, Eq.~\eqref{deqsubs1} reduces to the following quadratic equation in $x_1$ and $x_2$:
\begin{equation}
\phi_0(m_1,\,m_2)x_1^2+\phi_1(m_1,\,m_2)x_1x_2+\phi_2(m_1,\,m_2)x_2^2=0.
\label{deqsubs1red}
\end{equation}

In view of property ${\bf D_3}$, we note that the coefficients of $x_1^2,\;x_1x_2$ and $x_2^2$ in Eq.~\eqref{deqsubs1red} cannot vanish simultaneously. Since $(x_1,\,x_2,\,x_3,\,x_4)=(\alpha_1,\,\alpha_2,\,\alpha_3,\,$ $\alpha_4)$ is already a known solution of \eqref{gendeq4var}, Eq.~\eqref{deqsubs1red} is necessarily solvable and one of its solutions, corresponding to the known solution, is given by $(x_1,\,x_2)=(\alpha_1,\,\alpha_2)$. At the same time, \eqref{deqsubs1red} has a  second solution  which satisfies the condition $(x_1,\,x_2) \neq (0,\,0)$. With these values of $x_1$ and $x_2$,  we get a new point on the surface Eq.~\eqref{gendeq4var} by using the relations \eqref{subs1}. Since $(m_1,\,m_2) \neq (0,\,0)$, it now follows from condition (i) of property ${\bf D_3}$ that the new point just obtained also satisfies the condition that its last two coordinates are not simultaneously 0. 

Any two solutions of Eq.~\eqref{gendeq4var}  obtained by using the substitution \eqref{subs1} and solving the resulting quadratic equation \eqref{deqsubs1red} will be referred to as conjugate solutions, and the points on the surface \eqref{gendeq4var} corresponding to  conjugate solutions will be referred to as conjugate points. The conjugate of a point $P$ will be denoted by $C(P)$.

Thus, starting from a known rational point $P$  which satisfies  the condition that its first two coordinates are not simultaneously 0 and its last two coordinates are also not simultaneously 0, we can find two new rational points $R(P)$ and $C(P)$ both of which satisfy the condition that their first two coordinates are not simultaneously 0 and their last two coordinates are also not simultaneously 0. 

On the two new rational points just obtained, we  can  again  perform either of the two operations of finding the reflection or the conjugate (written briefly as the $R$ and $C$ operations respectively),   and in this manner,  we can continue to perform these two operations any number of times and in any order. At each step, the  rational points thus generated satisfy the condition that their first two coordinates are not simultaneously 0 and their last two coordinates are also not simultaneously 0, and hence we can actually execute the $R$ and $C$ operations any number of times.

 As long as it is clear in which order the $R$ and $C$ operations are to be performed, we may drop the parentheses --- for instance, we may  write the point $R(C(C(R(C(P)))))$ simply as $RCCRC(P)$. We may similarly also insert parentheses as long as there is no ambiguity.

It readily follows from the definitions of $C(P)$ and $R(P)$ that
\[ C^2(P)=C(C(P))=P \quad \mbox{\rm and}\quad R^2(P)=R(R(P))=P.\]
Therefore new rational  points on the surface \eqref{gendeq4var} can  be generated only if we  perform the operations of finding the reflection and the conjugate of a point alternately. 

The combined operation of first finding the conjugate $C(P)$ of a point $P$, and then finding  the reflection   of the point $C(P)$ will be referred to as the $RC$ operation, and the point thus obtained will be written as $RC(P)$. We can repeat this $RC$ operation on the point $RC(P)$ to obtain a new point, denoted by $(RC)^2(P)$, and we may continue this process any number of times. The point obtained by performing the $RC$ operation $k$ times will be denoted by $(RC)^k(P)$. We thus obtain a sequence of rational points on the surface \eqref{gendeq4var} given by,
\begin{equation} P,\;\;RC(P),\;\;(RC)^2(P),\;\;(RC)^3(P),\ldots,\,\;\;(RC)^k(P),\ldots,\;\;. \label{seq1} \end{equation}

Similarly we define the $CR$ operation as the combined operation of first taking the reflection of  a point $P$  on the surface \eqref{gendeq4var} and then finding its conjugate, and  the point thus obtained is denoted by $CR(P)$. By repeating the $CR$ operation $k$ times, we obtain the point $(CR)^k(P)$, and we thus get the sequence of rational points on the surface \eqref{gendeq4var} given by,
\begin{equation} P,\;\;CR(P),\;\;(CR)^2(P),\;\;(CR)^3(P),\ldots,\,\;\;(CR)^k(P),\ldots,\;\;.\label{seq2} \end{equation}

For convenience, we define $(RC)^0(P) =P$ and $(CR)^0(P)=P$. It follows from the definitions of the operations of finding the reflection and  the conjugate of rational points on the surface \eqref{gendeq4var} that if $(RC)(P)=Q$ then $P=(CR)(Q)$. Further, if $(RC)^k(P)=Q$ then, for any positive integer $h < k$, we have  $(RC)^{k-h}P=(CR)^h(Q)$. Similarly, it follows that if $(CR)^k(P)=Q$ then, for any positive integer $h < k$, we have  $(CR)^{k-h}(P)=(RC)^h(Q)$. We note here that, in general, $CR(P) \neq RC(P)$. In Section 3.2, we will see a specific example illustrating this fact. 

We note that there may exist certain points $P$ on the surface \eqref{gendeq4var} such that the conjugate point  of $P$ coincides with the point $P$. Such points satisfy the condition $C(P)=P$ and will be called self-conjugate points. Similarly, there may exist points $P$ on the surface \eqref{gendeq4var} such that the reflection of $P$ coincides with the point $P$. Such points that remain invariant under reflection satisfy the condition $R(P)=P$ and, when the definition of reflection is clear from the context, we will simply refer to these points as invariant points.

We will denote the individual coordinates of a point $P$ by $x_i(P),\;i=1,\,2,\,3,\,4$. Similarly, we will denote the individual coordinates of the points $R(P),\;C(P)$ and $RC(P)$  by $x_i(RP),\;x_i(CP)$ and $x_i(RCP),\;i=1,\,2,\,3,\,4$ respectively. 

\subsection{}

With respect to both  the sequences of points \eqref{seq1} and \eqref{seq2}, there exists the  possibility that for some positive integer $k$, the $(k+1)^{\rm th}$ point of the sequence is the same as  either  the initial point $P$ or another point that occurs earlier in the sequence.

For the sequence \eqref{seq1}, this will happen if either $(RC)^k(P)=P$ or $(RC)^k(P)=(RC)^j(P)$  where $1 \leq j < k$. In the latter case, it is readily seen that $(RC)^{k-j}(P)=P$. Thus, in either case, there exists an integer $m$ such that $(RC)^m(P)=P$. We then say that $P$ is a point of finite order and the least positive integer $m$ such that $(RC)^m(P)=P$ will be called the order of $P$ with respect to the $RC$ operation. If $P$ is a point of finite order, the sequence \eqref{seq1} will only generate finitely many points on the surface \eqref{gendeq4var}. 

If there is no integer $m$ such that $(RC)^m(P)=P$, we say that $P$ is a point of infinite order with respect to the $RC$ operation and the sequence of points  \eqref{seq1} gives infinitely many  distinct rational points on the surface \eqref{gendeq4var}.

We similarly define the order of a point $P$ with respect to the $CR$ operation as the least positive integer $m$ such that $(CR)^m(P)=P$. We note that when $(RC)^m(P)=P$, then $(CR)^m(P)=(CR)^m(RC)^m(P)=P$, and similarly when $(CR)^m(P)=P$, then $(RC)^m(P)=P$. It follows that the order of any point $P$ with respect to the $RC$ operation  is the same as the order of $P$ with respect to the $CR$ operation. We may thus simply write that the order of a point $P$ is $m$ without referring to the $RC$ or the $CR$ operation. 

\subsection{}

For any specific surface defined by an  equation \eqref{gendeq4var} satisfying the three properties ${\bf D_1,\;D_2,\;D_3}$ mentioned in Section 2.1,  we may obtain a sequence of rational points \eqref{seq1} or \eqref{seq2}, and we may then wish to establish that this is indeed an infinite sequence of distinct rational points on the surface \eqref{gendeq4var}. We could prove this possibly by using the method of induction. The following lemma gives   an alternative method for proving the existence of an infinite sequence of rational, and hence integer, points on the surface \eqref{gendeq4var}.

\begin{lemma}\label{infinitepts} Let  there exist an equation of type \eqref{gendeq4var} satisfying the properties ${\bf D_1,\; D_2,\;D_3}$. If, on the surface defined by Eq.~\eqref{gendeq4var}, the number of rational self-conjugate points be $n_1$ and the number of rational invariant points be $n_2$,   then there are infinitely many  integer points on the  surface \eqref{gendeq4var} if $n_1+n_2$ is an odd integer. 
\end{lemma}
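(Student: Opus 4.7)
The plan is to analyze the combinatorial structure of the rational points of the surface under the action of the two involutions $R$ and $C$, and then to deduce the existence of an infinite orbit from a parity count of the self-conjugate and invariant points.

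First I form a graph $\Gamma$ whose vertices are the rational points of the surface (satisfying the non-triviality conditions of Section 2.1) and whose edges are of two colors: an $R$-edge $\{P,R(P)\}$ for each $P$ and a $C$-edge $\{P,C(P)\}$ for each $P$, with a colored self-loop at $P$ whenever $R(P)=P$ or $C(P)=P$. Since $R$ and $C$ are involutions (Section 2.2 gives $R^2=C^2=\mathrm{id}$), every vertex is incident to exactly one $R$-edge and exactly one $C$-edge. A straightforward case analysis on the shape of a connected component then forces one of exactly five possibilities: (i) a finite cycle whose edges alternate in color, necessarily of even length and containing no self-loops; (ii) a doubly infinite alternating path; (iii) a half-infinite alternating path whose unique endpoint carries a self-loop; (iv) a finite alternating path whose two endpoints each carry a self-loop; or (v) an isolated vertex carrying both an $R$-self-loop and a $C$-self-loop (the degenerate case $R(P)=C(P)=P$).

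Next I count, component by component, how many self-conjugate points (those with a $C$-self-loop) and invariant points (those with an $R$-self-loop) they contain. Cycles and doubly infinite paths contribute $0$ to $n_1+n_2$. A half-infinite path contributes exactly $1$ (one endpoint, of one of the two types). A singleton of type (v) is simultaneously self-conjugate and invariant, contributing $2$. For a finite path, a short alternation argument shows that if the number of edges is odd then the two endpoints carry self-loops of the same color (both self-conjugate or both invariant), while if it is even then the endpoints carry self-loops of different colors; in all subcases a finite path contributes exactly $2$ to $n_1+n_2$. Summing gives
\[
n_1+n_2 \;=\; 2\bigl(\#\text{finite paths}+\#\text{singletons}\bigr)+\#\text{half-infinite paths},
\]
so $n_1+n_2 \equiv \#\text{half-infinite paths}\pmod{2}$.

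Consequently, if $n_1+n_2$ is odd there must exist at least one half-infinite alternating path in $\Gamma$. Such a component consists of infinitely many distinct rational points on the surface; after scaling to clear denominators we obtain infinitely many integer points, as claimed. The main obstacle in this plan is simply verifying the alternation argument for the endpoints of a finite path and confirming that the five component types really are exhaustive; once the graph-theoretic picture is in place, the rest is bookkeeping.
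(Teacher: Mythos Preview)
Your argument is correct and is, at bottom, the same parity argument as the paper's, but your graph-theoretic packaging is considerably cleaner. The paper works directly with the $RC$-iteration: starting from each self-conjugate or invariant point $P$ it forms the finite orbit $P,(RC)(P),\ldots,(RC)^{m-1}(P)$ (when $P$ has finite order $m$) and then, through four separate case analyses (self-conjugate start, invariant start, each split into $m$ odd/even), verifies that such an orbit contains exactly two special points; a further argument shows distinct orbits are disjoint. Your component classification (alternating cycle, doubly infinite path, half-infinite path, finite path, singleton) encodes all of this at once: the paper's finite-order orbits with a special starting point are precisely your finite paths and singletons, its infinite-order orbit is your half-infinite path, and its ``two special points per orbit'' computation is your observation that a finite path has two self-looped endpoints. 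What your framing buys is that the exhaustiveness and disjointness come for free from the fact that each vertex has exactly one $R$-edge and one $C$-edge, so you avoid the paper's repeated manipulations of strings in $R$ and $C$. What the paper's version buys is that it never needs to name the components that contain no special points at all (your types (i) and (ii)), since it only ever starts from a special point.
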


\begin{proof} Starting from  each rational self-conjugate or invariant point $P$,  we will repeatedly apply the $RC$ operation to generate sequences of rational points of the type \eqref{seq1}. We will show that if the initial point $P$ is of finite order, the  total number of self-conjugate points  and invariant  points in each such sequence is exactly 2. We will also show that  any two of the sequences either consist of exactly the same points or they have no points in common. Thus the total  number of self-conjugate points and invariant  points in all such sequences put together is an even number. Since $n_1+n_2$ is odd, there remains at least one self-conjugate point or one invariant point that must necessarily be of infinite order. It follows that there are infinitely many rational, and hence integer,  points on the surface \eqref{gendeq4var}.

We will  now prove  that in each sequence  obtained by starting from a rational self-conjugate or an invariant point of finite order, the  total number of self-conjugate points and invariant  points is exactly 2. If the initial point $P$ is of order 1, then $RC(P)=P$ so that $C(P)=R(P)=P$, and while the sequence \eqref{seq1} reduces to just the single point $P$, this  point  is counted once as a self-conjugate point and once as an invariant point. Thus the total number of self-conjugate points and invariant points in the sequence is to be taken as 2. We also note that such a point $P$ is also included twice in the number $n_1+n_2$, once as a self-conjugate point and once as an invariant point.

Next let $P$ be a rational self-conjugate point of finite order $m \geq 2$ so that $C(P)=P$ and 
\begin{equation}
(RC)^m(P)=P. \label{Porderm}
\end{equation}
Starting with   the point $P$ and repeatedly applying the $RC$ operation, we get a  sequence of $m$ distinct rational points,
\begin{equation} P,\;(RC)(P),\;(RC)^2(P),\;\ldots,\;(RC)^{m-1}(P).\label{seqL1} \end{equation}
We will now determine the points of the sequence \eqref{seqL1} that are self-conjugate or invariant. 

If $(RC)^h(P),\; 1 \leq h \leq m-1$ is a self-conjugate point, then 
\[
\begin{aligned}
 C(RC)^h(P)&=(RC)^h(P),\\
{\rm or,} \quad \quad  (CR)^hC(RC)^h(P)&=(CR)^h(RC)^h(P),\\
{\rm or,} \quad \quad  C(RC)^{2h}(P)&=P,\\
{\rm or,} \quad \quad  CC(RC)^{2h}(P)&=C(P),\\
{\rm or,} \quad \quad  (RC)^{2h}(P)&=P.  
\end{aligned}
\]
We also note that when $m$ is even, it follows from \eqref{Porderm} that $(RC)^{m/2}(P)=(CR)^{m/2}(P)$, and  hence, 
\[C(RC)^{m/2}(P)=C(CR)^{m/2}(P)=CC(RC)^{m/2-1)}RC(P)=(RC)^{m/2}(P).\]

It follows that when $m$ is odd, the sequence \eqref{seqL1} contains exactly one   self-conjugate point, namely the initial  point $P$,  and  when $m$ is even,  it contains exactly two self-conjugate points, namely the point $P$ and the point   $(RC)^{m/2}(P)$,  which is distinct from $P$.

If $(RC)^h(P),\; 1 \leq h \leq m-1$ is an invariant point, then
\[
\begin{aligned}
 R(RC)^h(P)&=(RC)^h(P),\\
{\rm or,} \quad \quad    R(RC)^h(P)&=R(CR)^{h-1}C(P),\\ 
{\rm or,} \quad \quad    R(RC)^h(P)&=R(CR)^{h-1}(P),\quad {\rm since}\;\; C(P)=P,\\
{\rm or,} \quad \quad    (RC)^h(P)&=(CR)^{h-1}(P),\\
{\rm or,} \quad \quad  (RC)^{h-1}(RC)^h(P)&=(RC)^{h-1}(CR)^{h-1}(P),\\
{\rm or,} \quad \quad    (RC)^{2h-1}(P)&=P.
\end{aligned}
\]
We also note that when m is  odd, it follows from \eqref{Porderm} that $(RC)^{(m+1)/2}(P) = (CR)^{(m-1)/2}(P)$, and hence, 
\[R(RC)^{(m+1)/2}(P) = R(CR)^{(m-1)/2}C(P)=(RC)^{(m+1)/2}(P).\] 

It follows that when $m$ is odd, the sequence \eqref{seqL1} contains exactly one invariant point distinct from $P$, namely $(RC)^{(m+1)/2}(P)$, but when $m$ is even,  it does not contain any invariant points.

Thus, whether $m$ is odd or even, the total number of self-conjugate points and invariant points in the sequence of points \eqref{seqL1}, obtained by starting from a rational self-conjugate point of finite order, is  exactly 2 (including the initial self-conjugate point $P$). 

Next, let $P$ be a rational   invariant point of finite order $m \geq 2$, so that $R(P)=P$ and   the relation \eqref{Porderm} is also satisfied. We will now determine the points of the sequence \eqref{seqL1} that are self-conjugate or invariant.

If $(RC)^h(P),\; 1 \leq h \leq m-1$, is a self-conjugate point, then
\[
\begin{aligned}
 C(RC)^h(P)&=(RC)^h(P),\\
\mbox{\rm or,} \quad \quad  (CR)^hC(RC)^h(P)&=(CR)^h(RC)^h(P),\\
\mbox{\rm or,} \quad \quad  C(RC)^{2h}(P)&=P,\\
\mbox{\rm or,} \quad \quad  RC(RC)^{2h}(P)&=R(P),\\
\mbox{\rm or,} \quad \quad  (RC)^{2h+1}(P)&=P, \quad \mbox{\rm since}\;\; R(P)=P. 
\end{aligned}
\]
We also note that when $m$ is odd, it follows from \eqref{Porderm} that $(RC)^{(m-1)/2}(P)=(CR)^{(m+1)/2}(P)$, and hence,
\[C(RC)^{(m-1)/2}(P)=C(CR)^{(m+1)/2}R(P)=(RC)^{(m-1)/2}(P).\]

It follows that when $m$ is is odd, the sequence \eqref{seqL1} contains exactly one  self-conjugate point, namely    $(RC)^{(m-1)/2}(P)$,  which is distinct from $P$ but when $m$ is even, it  does not contain any  self-conjugate points.

 If $(RC)^h(P),\;1 \leq h \leq m-1$ is a point that remains invariant under reflection, then 
\[
\begin{aligned}
 R(RC)^h(P)&=(RC)^h(P),\\
{\rm or,} \quad \quad   R(RC)^h(P)&=(RC)^hR(P),\quad {\rm since} R(P)=P,\\ 
{\rm or,} \quad \quad   R(RC)^h(P)&=R(CR)^{h}(P),\\
{\rm or,} \quad \quad    (RC)^h(P)&=(CR)^{h}(P),\\
{\rm or,} \quad \quad  (RC)^h(RC)^h(P)&=(RC)^h(CR)^{h}(P),\\
{\rm or,} \quad \quad    (RC)^{2h}(P)&=P.
\end{aligned}
\]
We also note that when $m$ is even, it follows from \eqref{Porderm} that $(RC)^{m/2}P=(CR)^{m/2}(P)$, and hence,
\[R(RC)^{m/2}P=R(CR)^{m/2}(P)=(RC)^{m/2}R(P)=(RC)^{m/2}(P).\]

It follows that when $m$ is odd, the sequence \eqref{seqL1} contains exactly one  invariant point, namely the initial point $P$, and  when $m$ is even, it  contains exactly two  invariant points, namely the point $P$ and the point $(RC)^{m/2}P$ which is distinct from $P$.

Thus, whether $m$ is odd or even, the total number of self-conjugate points and invariant points in the sequence of points \eqref{seqL1},  obtained by starting from a rational  invariant point of finite order,  is also  exactly 2 (including the initial invariant point $P$).

We have thus shown that in all the sequences of type \eqref{seqL1}, obtained by starting from  a rational self-conjugate or an invariant point of finite order, the total number of self-conjugate points and invariant points  is exactly 2.  We will now show that two such  sequences either consist of exactly the same points or they have no points in common.

If any two sequences of the type \eqref{seqL1}, obtained  by starting from two distinct rational points $P$ and $P_1$, of finite orders $m$ and $m_1$ respectively, have a point in common, we must have $(RC)^h(P)=(RC)^{h_1}(P_1)$ for some integers $h$ and $h_1$. There exists an integer $\lambda$ such that $\lambda m+h \geq h_1 $  and it follows from the relation $(RC)^h(P)=(RC)^{h_1}(P_1)$ that $(RC)^{\lambda m+h-h_1}(P)=P_1$. It now follows that all points of the second sequence are included in the first sequence. Similarly, there exists an  integer $\lambda_1$ such that $\lambda_1 m_1+h_1 \geq h$ and it follows from the relation $(RC)^h(P)=(RC)^{h_1}(P_1)$ that $(RC)^{\lambda_1 m_1+h_1-h}(P_1)=P$. It now follows that  all points of the first sequence are included in the second sequence, and hence the two sequences consist of exactly the same points. 

Thus the total  number of self-conjugate points and invariant points in all the distinct sequences of type \eqref{seqL1}, obtained by starting from  all the rational self-conjugate or  invariant points of finite order,  is an even integer.    Since the total number of  rational self-conjugate points and invariant points is an odd integer, at least one of these points cannot be of finite order. Starting from such a point that is not of finite order, we obtain an infinite number of rational, and hence integer, points on the surface \eqref{gendeq4var}. This proves the lemma. 
\end{proof}

\subsection{}
We have, for the sake of simplicity,  stipulated in property ${\bf D_2}$ that the form $f(x_i)$ should satisfy the condition $f(x_1,\,x_2,\,x_3,\,x_4)$ $=f(-x_1,\,x_2,\,x_3,\,x_4)$. In fact, our method will work as long as the form  $f(x_i)$ possesses a certain symmetry so that when we know one solution of Eq.~\eqref{gendeq4var}, we immediately get another solution using the symmetry. For instance, the form  $f(x_i)$ may satisfy the condition $f(x_1,\,x_2,\,x_3,\,x_4)=f(x_2,\,x_1,\,x_3,\,x_4)$ or $f(x_1,\,x_2,\,x_3,\,x_4)=f(x_3,\,x_4,\,x_1,\,x_2)$. A more interesting diophantine equation is
\begin{equation}
F(x_1,\,x_2)+4F(x_3,\,x_4)=0, \label{inteqn}
\end{equation}
where $F(x_1,\,x_2)$ is a binary quartic form, and we note that whenever a point $P$ given by $(\alpha_1,\,\alpha_2,\,\alpha_3,\,\alpha_4)$ lies on the surface \eqref{inteqn}, another point on the surface  \eqref{inteqn} is given by $(2\alpha_3,\,2\alpha_4,\,\alpha_1,\,\alpha_2)$. Thus, in this case, we define $R(P)=(2\alpha_3,\,2\alpha_4,\,\alpha_1,\,\alpha_2)$.  If Eq.~\eqref{inteqn} also satisfies the properties  ${\bf D_1}$ and  ${\bf D_3}$, we can obtain a sequence of points on the surface \eqref{inteqn} by the method described above.

\subsection{} In Sections 3, 4 and 5, we will show that there exist    infinitely many  examples of quartic and higher degree diophantine equations of type \eqref{gendeq4var} which have infinitely many solutions. These infinitely many solutions are obtained by reducing \eqref{gendeq4var} to an equation of type \eqref{deqsubs1red}. Now Eq.~\eqref{deqsubs1red} is a quadratic equation in the variables $x_1$ and $x_2$, and it has a rational solution if and only if its discriminant $\phi_1^2(m_1,\,m_2)-4\phi_0(m_1,\,m_2)\phi_2(m_1,\,m_2)$ becomes a perfect square. It readily follows that when  Eq.~\eqref{gendeq4var} has infinitely many solutions, the nonhomogeneous polynomial diophantine equation in the variables $m_1,\,m_2$ and $z$ given by
\begin{equation}
\phi_1^2(m_1,\,m_2)-4\phi_0(m_1,\,m_2)\phi_2(m_1,\,m_2)=z^2, \label{deqm1m2}
\end{equation}
also simultaneously has infinitely many solutions in rational numbers.

\section{Quartic diophantine equations}\label{quarteqn}
In Section 3.1 we will  construct a very general quartic equation that can be solved by the method described in Section 2. In Sections 3.2 and 3.3, we  discuss  some specific quartic diophantine equations  and prove that there exist infinitely many quartic diophantine equations  for which we can obtain  an arbitrarily large number of solutions by applying this method.

\subsection{A general quartic equation} 
Consider the  quadratic forms in the variables $x_1,\,x_2,\,x_3,\,x_4,$ given by,
\begin{equation}
\begin{aligned}
Q(x_1,\,x_2)&=x_1^2+px_1x_2+qx_2^2,\\
Q_1(x_1,\,x_2,\,x_3,\,x_4)&=x_1x_3-qx_2x_4,\\
Q_2(x_1,\,x_2,\,x_3,\,x_4)&=x_1x_4+x_2x_3+px_2x_4,
 \end{aligned}
\label{Mdefqd}
\end{equation}
where $p,\,q$ are arbitrary parameters. If we write 
\begin{equation}
x_3=m_1x_1+(pm_1+qm_2)x_2, \quad x_4=m_2x_1-m_1x_2, \label{Msubs1}
\end{equation}
where $m_1$ and $m_2$ are arbitrary parameters, then we have the identities,
\begin{equation}
\begin{aligned}
Q(x_3,\,x_4)&=Q(m_1,\,m_2)Q(x_1,\,x_2),\\
Q_1(x_1,\,x_2,\,x_3,\,x_4)&=m_1Q(x_1,\,x_2),\\
Q_2(x_1,\,x_2,\,x_3,\,x_4)&=m_2Q(x_1,\,x_2). 
\end{aligned} 
\label{Mrelqd}
\end{equation}

We  now consider the quartic equation,
\begin{equation}
F(x_1,\,x_2,\,x_3,\,x_4)=0, \label{quarteqngen}
\end{equation}
where  $F(x_1,\,x_2,\,x_3,\,x_4)$ is the quartic form defined by,
\begin{multline}
F(x_1,\,x_2,\,x_3,\,x_4)=a_1Q(x_1,\,x_2)Q(-x_1,\,x_2)\\
+a_2Q_1(x_i)Q_1(-x_1,\,x_2,\,x_3,\,x_4)+a_3Q_2(x_i)Q_2(-x_1,\,x_2,\,x_3,\,x_4)\\
+a_4\{Q(x_1,\,x_2)Q_1(-x_1,\,x_2,\,x_3,\,x_4)+Q(-x_1,\,x_2)Q_1(x_i)\}\\
+a_5\{Q(x_1,\,x_2)Q_2(-x_1,\,x_2,\,x_3,\,x_4)+Q(-x_1,\,x_2)Q_2(x_i)\}\\
+(a_6x_1^2+a_7x_2^2+a_8x_2x_3+a_9x_2x_4+a_{10}x_3^2+a_{11}x_3x_4+a_{12}x_4^2)Q(x_3,\,x_4), \label{quartgen}
\end{multline}
with $a_j,\;j=1,\,2,\,\ldots,\,12$, being  arbitrary integer coefficients.

 It is readily seen  that $F(x_1,\,x_2,\,x_3,\,x_4)=F(-x_1,\,x_2,\,x_3,\,x_4)$. Thus, the form $F(x_1,\,x_2,\,x_3,\,x_4)$ satisfies the property ${\bf D_2}$. Further, if in the form $F(x_i)$, we substitute the values of $x_3,\,x_4$ given by \eqref{Msubs1},  it is readily observed, in view of the relations \eqref{Mrelqd}, that Eq.~\eqref{quarteqngen} reduces to an equation of type \eqref{deqsubs1} where $\psi(x_1,\,x_2)$ is, in fact, given by $Q(x_1,\,x_2)$. 

The parameters $p,\,q$ and the coefficients $a_j$ can easily be chosen such that   the property ${\bf D_1}$ and the remaining conditions stipulated in property ${\bf D_3}$ are  satisfied.

Thus, we can find a sequence of rational solutions of Eq.~\eqref{quarteqngen} starting from a known solution. In the next two subsections, we consider a couple of special cases of Eq.~\eqref{quarteqngen}.

\subsection{}
In Eq.~\eqref{quarteqngen}, we take \[
\begin{aligned}
&p&=&3,&\;\; &q&=&-1,&\;\; &a_1&=&1,&\;\;&a_2&=&-2h,&\;\;&a_3&=&-h,&\\
&a_4&=&-h,&\;\;&a_5&=&-h,&\;\;&a_6&=&-h,&\;\;&a_7&=&4h,&\;\;&a_8&=&-h,&\\
&a_9&=&-h,&\;\;&a_{10}&=&1,&\;\; &a_{11}&=&3,&\;\;&a_{12}&=&-1,&& &&
\end{aligned}\] 
where $h$ is an arbitrary integer parameter, and  we thus get the following equation:
\begin{multline}
x_1^4-11x_1^2x_2^2+x_2^4+x_3^4+6x_3^3x_4+7x_3^2x_4^2-6x_3x_4^3+x_4^4\\
+h(4x_1^2x_2x_3-2x_1^2x_2x_4+x_1^2x_3^2-3x_1^2x_3x_4+2x_1^2x_4^2\\
+2x_2^3x_3+8x_2^3x_4+3x_2^2x_3^2+6x_2^2x_3x_4-15x_2^2x_4^2\\
-x_2x_3^3-4x_2x_3^2x_4-2x_2x_3x_4^2+x_2x_4^3)=0.
\label{quarteqnex1}
\end{multline}

It is easily verified that for all values of $h$,  a solution of Eq.~\eqref{quarteqnex1} is given by $(x_1,\,x_2,\,x_3,\,x_4)=(1,\,1,\,1,\,1)$. 

On substituting the values of $x_3,\;x_4$ given by 
\begin{equation}
x_3 = m_1x_1+(3m_1-m_2)x_2,\quad x_4 = m_2x_1-m_1x_2, \label{subsquarteqnex1}
\end{equation}
Eq.~\eqref{quarteqnex1} reduces, after removing the irreducible factor $x_1^2+3x_1x_2-x_2^2$, to
 the following equation: 
\begin{multline}
\{(m_1^2-3m_1m_2+2m_2^2)h+m_1^4+6m_1^3m_2+7m_1^2m_2^2-6m_1m_2^3+m_2^4+1\}x_1^2\\
-\{(m_1^3+4m_1^2m_2+2m_1m_2^2-m_2^3-6m_1^2+6m_1m_2+3m_2^2-4m_1+2m_2)h\\
-3m_1^4-18m_1^3m_2-21m_1^2m_2^2+18m_1m_2^3-3m_2^4+3\}x_1x_2\\
-\{(2m_1^3+5m_1^2m_2-5m_1m_2^2+m_2^3-6m_1^2-12m_1m_2+3m_2^2-2m_1-2m_2)h\\
+m_1^4+6m_1^3m_2+7m_1^2m_2^2-6m_1m_2^3+m_2^4+1\}x_2^2=0. \label{quarteqnex1subs1}
\end{multline}

 It is now readily verified that, for any arbitrary value of $h$, Eq.~\eqref{quarteqnex1} satisfies all the three properties ${\bf D_1}$, ${\bf D_2}$ and ${\bf D_3}$ mentioned in Section 2.1.  

Now,  starting from the point $P_0=(1,\,1,\,1,\,1)$ and  repeatedly applying the $RC$ operation, we   obtain a sequence of points $P_0,\,P_1,\,P_2,\,\dots,\,P_n$ on the surface \eqref{quarteqnex1}. The coordinates of these points are rational functions of $h$. We will show that given an arbitrary positive integer $n$, howsoever large, there exists an  integer value of $h$ such that these   $n$  points are distinct and rational. 

If the points $P_0,\,P_1,\,P_2,\,\dots,\,P_n$ are not distinct,  $P_0$ must be a point of finite order not exceeding $n$. The point $P_0$ will be  of finite order $r < n$ only if the following conditions are satisfied:
\begin{equation}
x_1(P_r)=x_2(P_r)=x_3(P_r)=x_4(P_r) \neq 0, \label{condquarteqnex1}
\end{equation}
where, as already noted, the coordinates $x_i(P_r),\; i=1,\,2,\,3,\,4$, are rational functions of $h$.  We now show that these conditions cannot be satisfied identically for all values of $h$. 

We consider the case $h=0$ when Eq.~\eqref{quarteqnex1} may be written as,
\begin{equation}
x_1^4-11x_1^2x_2^2+x_2^4+x_3^4+6x_3^3x_4+7x_3^2x_4^2\\
-6x_3x_4^3+x_4^4=0.
\label{quarteqnex1spl}
\end{equation}
while Eq.~\eqref{quarteqnex1subs1} reduces  to
 the following equation: 
\begin{multline}
(m_1^4+6m_1^3m_2+7m_1^2m_2^2-6m_1m_2^3+m_2^4+1)x_1^2\\
+3(m_1^2+3m_1m_2-m_2^2-1)(m_1^2+3m_1m_2-m_2^2+1)x_1x_2\\
-(m_1^4+6m_1^3m_2+7m_1^2m_2^2-6m_1m_2^3+m_2^4+1)x_2^2=0.
\label{quarteqnex1splsubs1}
\end{multline}

Let a rational point $P$ on the surface \eqref{quarteqnex1spl} be given by $(3^t,\,3^t,\,\alpha,\,\beta)$ where $\alpha,\,\beta$ are integers such that $\alpha \equiv 1\; (\bmod \;3)$ , $\beta \equiv 1\; (\bmod \;3)$, and $t$ is  a nonnegative integer. The values of $m_1,\,m_2$ corresponding to the point $P$ are given by 
\begin{equation}
m_1= (\alpha+\beta)/3^{t+1},\;\; m_2= (\alpha+4\beta)/3^{t+1}.
\end{equation}

With these values of $m_1, \,m_2$,  we now work out the conjugate $C(P)$ of the point $P$. Eq.~\eqref{quarteqnex1splsubs1} gives the values of the ratios $x_1/x_2$ for a point $P$ and its conjugate $C(P)$.  It follows from \eqref{quarteqnex1splsubs1} that
\[
\frac{x_1(P)}{x_2(P)}.\frac{x_1(CP)}{x_2(CP)} = -1.
\]
Since $x_1(P)=x_2(P)=3^k$, we get $-x_1(CP)=x_2(CP)=m$ where $m$ is an arbitrary rational number, and we obtain the values of  $x_3(CP),\,x_4(CP)$ using the relations \eqref{subsquarteqnex1}. We thus obtain the point $C(P)$, and on taking the reflection of this point, we  obtain the point $RC(P)$ which is given by $(3^{t+1},\,3^{t+1},\,\alpha^{\prime},\,\beta^{\prime})$ where $\alpha^{\prime} =2(\alpha+\beta)-3\alpha,\,\beta^{\prime}=2(\alpha+\beta)+3\beta$. Thus, we get $\alpha^{\prime} \equiv 1\; (\bmod \; 3)$, $\beta^{\prime} \equiv \; (\bmod \; 3)$ and it now follows by induction that the sequence of points on the surface  \eqref{quarteqnex1spl},  obtained by starting from $(1,\,1,\,1,\,1)$ as the initial point $P_0$, consists of infinitely many distinct rational points. The first four points of the sequence obtained in this manner are $P_0=(1,\,1,\,1,\,1)$, $P_1=(3,\,3,\,1,\,7)$, $P_2=(3^2,\,3^2,\,13,\,37)$, $P_3=(3^3,\,3^3,\,61,\,211)$.

Reverting to the original equation \eqref{quarteqnex1}, we can now  conclude that the conditions \eqref{condquarteqnex1} cannot be identically satisfied for all values of $h$ since they are not satisfied in the special case $h=0$. The conditions \eqref{condquarteqnex1} are, in fact,  polynomial equations in $h$ and they can be satisfied by, at most,  a finite number of integer values of $h$. Excluding these values of $h$, we can still assign infinitely many integer values to $h$, and thus obtain infinitely many surfaces \eqref{quarteqnex1} on which there are  at least $n$ distinct rational points where $n$ is any arbitrarily chosen positive integer.

When $h=7$, the first four points of the sequence of points found on the surface \eqref{quarteqnex1} starting from the point $P_0$ are as follows:

\[(1,\,1,\,1,\,1),\quad (-130,\,  31, \, 97,\, 196), \]
\[(2244925401,\,1768375579,\, -3244635281, \,5477857719),\]
and 
\[\begin{aligned}(&48174715330795614136594596008400681550325,\\
&39780816364918032832888229561551900194758,\\
& -90144829539953389017242502188382745170382, \\&-131064449157792994852020542549892912835855).
\end{aligned}
\]
Further, while $RC(P_0)=(-130,\,  31, \, 97,\, 196)$, we note that $CR(P_0)=(15,\,8,\,$ $ 8,\,-15)$. This illustrates the fact that, in general,  $RC(P) \neq CR(P)$  as was stated in Section 2.2. 

We have obtained  infinitely many  integer solutions of Eq.~\eqref{quarteqnex1spl} of the type $(3^k,\,3^k,\,\alpha,\,\beta)$. These solutions satisfy the condition $x_1=x_2$. We note that the complete solution of \eqref{quarteqnex1spl} satisfying the additional  condition $x_1$ $=x_2$ is readily obtained and is given by
\begin{equation}
x_1 = x_2= r^2-8rs+3s^2,\;\; x_3 = r^2-6rs+21s^2,\;\; x_4 = 2r^2-6s^2,
\end{equation}
and
\begin{equation}
x_1 = x_2= -r^2+rs+3s^2,\;\; x_3 = -r^2+6rs-6s^2, \;\;x_4 = r^2+3s^2,
\end{equation}
where $r$ and $s$ are arbitrary parameters. Thus Eq.~\eqref{quarteqnex1spl} is, by itself, not of much intrinsic interest except that the infinite sequence of points $P_0,\,P_1,\,P_2,$ $\dots,$ found on the surface \eqref{quarteqnex1spl} has been used to prove that there exist infinitely many values of $h$ for which the diophantine equation \eqref{quarteqnex1} has an arbitrarily large number of solutions. 

We note that for any arbitrary value of $h$, the surface defined by Eq.~\eqref{quarteqnex1} has singularities at the points $(0,\,0,\, -3+\sqrt{13},\,2)$ and $(0,\,0,\,-3-\sqrt{13},\,2)$.

We give below the nonhomogeneous equation obtained by equating the discriminant of Eq.~\eqref{quarteqnex1subs1} to a perfect square when $h=7$:
\begin{multline}
13m_1^8+156m_1^7m_2+650m_1^6m_2^2+936m_1^5m_2^3-273m_1^4m_2^4-936m_1^3m_2^5\\
+650m_1^2m_2^6-156m_1m_2^7+13m_2^8+14m_1^7+56m_1^6m_2-294m_1^5m_2^2\\
-1554m_1^4m_2^3-966m_1^3m_2^4+1806m_1^2m_2^5-644m_1m_2^6+70m_2^7\\
+161m_1^6+392m_1^5m_2-2254m_1^4m_2^2-4606m_1^3m_2^3+4046m_1^2m_2^4\\
-952m_1m_2^5+63m_2^6-84m_1^5-1428m_1^4m_2-1722m_1^3m_2^2+6384m_1^2m_2^3\\
-1596m_1m_2^4-42m_2^5+186m_1^4-3784m_1^3m_2+5222m_1^2m_2^2-3860m_1m_2^3\\
+1411m_2^4+2058m_1^3-2436m_1^2m_2+336m_1m_2^2-210m_2^3+392m_1^2\\
-952m_1m_2+462m_2^2-224m_1+28m_2+13 =z^2. \label{eqndiscex1}
\end{multline}

We have given above the first four solutions of Eq.~\eqref{quarteqnex1} when $h=7$, obtained by starting from the point $P_0$, and  corresponding to these solutions of Eq.~\eqref{quarteqnex1}, we can readily find rational solutions of Eq.~\eqref{eqndiscex1}. The values of $(m_1,\,m_2,\,z)$ for the first three  solutions of Eq.~\eqref{eqndiscex1} are as follows:
\[
\begin{array}{c}
(2/3,\, 5/3,\,66),\quad (-2178/1283,\, -1415/1283,\,249877066/1646089),\\[0.2in]
({\dfrac{1872911657224140}{10773306028890859}},\;{\dfrac{ 27763349640930281}{ 10773306028890859}},\\[0.2in]
{\dfrac{ 32394133198932561746630798860799374}{ 116064122792136130054389733757881}}).
\end{array}
\]
The next solution involves integers consisting of more than 64 digits and is hence omitted.

\subsection{}
As a second example, in Eq.~\eqref{quarteqngen} we take, 
\[
\begin{aligned}
&p&=&1,&\;\; &q&=&3,&\;\;&a_1&=&6,&\;\;&a_2&=&6,&\;\;&a_3&=&0,&\\
&a_4&=&1,&\;\;&a_5&=&6,&\;\;&a_6&=&-3,&\;\;&a_7&=&-105,&&a_8&=&12,&\\
&a_9&=&6,&\;\;&a_{10}&=&3,&\;\; &a_{11}&=&3,&\;\;&a_{12}&=&-3,&&&&
\end{aligned}\] 
when we get the quartic equation, 
\begin{multline}
6x_1^4+(30x_2^2+10x_2x_3-6x_2x_4-9x_3^2-3x_3x_4-9x_4^2)x_1^2\\
+54x_2^4+36x_2^3x_3+18x_2^3x_4-105x_2^2x_3^2-105x_2^2x_3x_4\\
-261x_2^2x_4^2+12x_2x_3^3+18x_2x_3^2x_4+42x_2x_3x_4^2+18x_2x_4^3\\
+3x_3^4+6x_3^3x_4+9x_3^2x_4^2+6x_3x_4^3-9x_4^4=0. \label{quarteqnex2}
\end{multline}

It is easily verified that $(x_1,\,x_2,\,x_3,\,x_4)=(0,\,1,\,1,\,0)$ is a solution of Eq.~\eqref{quarteqnex2}.

On substituting 
\begin{equation}
x_3 = m_1x_1+(m_1+3m_2)x_2,\quad  x_4 =m_2x_1-m_1x_2,
\end{equation}
Eq.~\eqref{quarteqnex2} reduces, after removing factor $(x_1^2+x_1x_2+3x_2^2)$, to
\begin{multline}
(3m_1^4+6m_1^3m_2+9m_1^2m_2^2+6m_1m_2^3-9m_2^4-9m_1^2-3m_1m_2-9m_2^2+6)x_1^2\\
+(3m_1^4+30m_2m_1^3+45m_1^2m_2^2+90m_1m_2^3+27m_2^4+12m_1^3+18m_1^2m_2\\
+42m_1m_2^2+18m_2^3-6m_1^2-36m_1m_2+10m_1-6m_2-6)x_1x_2-(3m_1^4-6m_1^3m_2\\
-27m_1^2m_2^2-54m_1m_2^3-81m_2^4-6m_1^3-42m_1^2m_2-54m_1m_2^2-108m_2^3\\
+87m_1^2+105m_1m_2+315m_2^2-6m_1-36m_2-18)x_2^2=0. \label{quarteqnex2subs1}
\end{multline}

It is now readily verified that Eq.~\eqref{quarteqnex2} satisfies all the three properties ${\bf D_1}$, ${\bf D_2}$ and ${\bf D_3}$ mentioned in Section 2.1.  Thus we may generate  a sequence of rational points on the surface \eqref{quarteqnex2} taking $(0,\,1,\,1,\,0)$ as the initial point. 

We will now show that the total number of all  rational self-conjugate points and rational  invariant points on the surface \eqref{quarteqnex2} is an odd integer, and then apply Lemma~\ref{infinitepts} to prove that there are infinitely many  integer solutions of Eq.~\eqref{quarteqnex2}.

The discriminant $d(m_1,\,m_2)$ of Eq.~\eqref{quarteqnex2subs1} is given by
\begin{multline}
d(m_1,\,m_2)=45m_1^8+180m_1^7m_2+810m_1^6m_2^2+1800m_1^5m_2^3+4095m_1^4m_2^4\\
+5400m_1^3m_2^5+7290m_1^2m_2^6+4860m_1m_2^7+3645m_2^8+180m_1^6m_2\\
+540m_1^5m_2^2+2160m_1^4m_2^3+3420m_1^3m_2^4+6480m_1^2m_2^5+4860m_1m_2^6\\
+4860m_2^7+1044m_1^6+3384m_1^5m_2+9000m_1^4m_2^2+13536m_1^3m_2^3\\
+10872m_1^2m_2^4+6264m_1m_2^5-8100m_2^6+60m_1^5+492m_1^4m_2+324m_1^3m_2^2\\
+2628m_1^2m_2^3+756m_1m_2^4+4860m_2^5-3036m_1^4-5112m_1^3m_2-15648m_1^2m_2^2\\
-10512m_1m_2^3-13176m_2^4-192m_1^3-504m_1^2m_2-720m_1m_2^2-1512m_2^3\\
+2908m_1^2+3048m_1m_2+8244m_2^2-264m_1-792m_2-396. \label{disquarteqnex2}
\end{multline}

If there exist any  rational self-conjugate points on the surface \eqref{quarteqnex2}, there must exist rational numbers $m_1$ and $m_2$ such that Eq.~\eqref{quarteqnex2subs1} has two coincident roots and hence $d(m_1,\,m_2)$ must be 0. As $m_1$ and $m_2$ are rational numbers, we write $m_1=n_1/n_0,\;m_2=n_2/n_0$ where $n_1,\,n_2$ and $n_0$ are integers such that $n_0 \neq 0$ and ${\rm gcd}(n_0,\,n_1,\,n_2)=1$, and now, on equating $d(m_1,\,m_2)$ to 0, we get an equation that may be written as follows:
\begin{multline}
396n_0^8+(264n_1+792n_2)n_0^7-(2908n_1^2+3048n_1n_2+8244n_2^2)n_0^6\\
+(192n_1^3+504n_1^2n_2+720n_1n_2^2+1512n_2^3)n_0^5\\
+(3036n_1^4+5112n_1^3n_2+15648n_1^2n_2^2+10512n_1n_2^3+13176n_2^4)n_0^4\\
-12(n_1^2+n_1n_2+3n_2^2)(5n_1^3+36n_1^2n_2-24n_1n_2^2+135n_2^3)n_0^3\\
-36(29n_1^2+36n_1n_2-25n_2^2)(n_1^2+n_1n_2+3n_2^2)^2n_0^2\\
-180(n_1^2+n_1n_2+3n_2^2)^3n_0n_2-45(n_1^2+n_1n_2+3n_2^2)^4=0.
\label{disquarteqnex2var1}
\end{multline}

We note that in Eq.~\eqref{disquarteqnex2var1}, all terms except the last are even integers, hence $(n_1^2+n_1n_2+3n_2^2)$ is an even integer and it readily follows that both $n_1$ and $n_2$ must be even integers. On substituting $n_1=2n_3$ and $n_2=2n_4$ in Eq.~\eqref{disquarteqnex2var1}, we get the following equation:
\begin{multline}
99n_0^8+(132n_3+396n_4)n_0^7-(2908n_3^2+3048n_3n_4+8244n_4^2)n_0^6\\
+(384n_3^3+1008n_3^2n_4+1440n_3n_4^2+3024n_4^3)n_0^5\\
+(12144n_3^4+20448n_3^3n_4+62592n_3^2n_4^2+42048n_3n_4^3+52704n_4^4)n_0^4\\
-96(n_3^2+n_3n_4+3n_4^2)(5n_3^3+36n_3^2n_4-24n_3n_4^2+135n_4^3)n_0^3\\
-576(29n_3^2+36n_3n_4-25n_4^2)(n_3^2+n_3n_4+3n_4^2)^2n_0^2\\
-5760n_4(n_3^2+n_3n_4+3n_4^2)^3n_0-2880(n_3^2+n_3n_4+3n_4^2)^4=0,
\label{disquarteqnex2var2}
\end{multline}

We note that in Eq.~\eqref{disquarteqnex2var2}, all terms except the first are even integers, hence $n_0$ must  an even integer. This is a contradiction since ${\rm gcd}(n_0,\,n_1,\,n_2)=1$. Hence the equation $d(m_1,\,m_2)=0$  has no rational solutions. It follows that there are no rational self-conjugate points on the surface \eqref{quarteqnex2}.

Next we determine the number of rational invariant points on the surface \eqref{quarteqnex2}. These points are precisely the rational points on \eqref{quarteqnex2} satisfying the conditions $x_1=0$ and $x_2 \neq 0$. On substituting $x_1=0$ in \eqref{quarteqnex2}, we get the condition,
\begin{multline}
 18x_2^4+12x_2^3x_3+6x_2^3x_4-35x_2^2x_3^2-35x_2^2x_3x_4\\
-87x_2^2x_4^2+4x_2x_3^3+6x_2x_3^2x_4+14x_2x_3x_4^2+6x_2x_4^3 \quad \quad \quad \quad \quad \quad \;\;\;\;\\
+x_3^4+2x_3^3x_4+3x_3^2x_4^2+2x_3x_4^3-3x_4^4=0. \quad \quad \quad \quad  \label{condRP}
\end{multline}

Eq.~\eqref{condRP} represents a curve of genus 3 in projective space and hence it has a finite number of integer solutions. It is readily verified that if $(x_2,\,x_3,\,x_4)$ $=(\alpha_2,\,\alpha_3,\,\alpha_4)$ is any integer  solution of \eqref{condRP} with $\alpha_2 \neq 0$ and $\alpha_4 \neq 0$, then another distinct integer solution of \eqref{condRP} is given by $(x_2,\,x_3,\,x_4)=(\alpha_2,\,\alpha_3+\alpha_4,\,-\alpha_4)$. Thus, integer solutions of \eqref{condRP} can be paired off except for those solutions in which $x_4=0$. On substituting $x_4=0$ in \eqref{condRP}, we get 
\begin{equation}
(x_2-x_3)(18x_2^3+30x_2^2x_3-5x_2x_3^2-x_3^3)=0, \label{condRP1}
\end{equation}
and so we get just one rational solution of \eqref{condRP} with $x_4=0$, namely $( x_2,\,x_3,\,x_4)=(1,\,1,\,0)$. This yields the   single rational invariant point $(0,\,1,\,$ $1,\,0)$.  As the other invariant points occur in pairs, this shows that there are an odd   number of  rational invariant points on the surface \eqref{quarteqnex2}.

As there are no self-conjugate points on the surface \eqref{quarteqnex2}, it follows that the total number of self-conjugate points and invariant  points is an  odd integer. It now follows from Lemma~\ref{infinitepts} that there are infinitely many rational points on the surface \eqref{quarteqnex2}.

We now obtain a sequence of rational points on the surface \eqref{quarteqnex2} by repeatedly applying the $RC$ operation starting from the point $(0,\,1,\,1,\,0)$. The first four points of the sequence are 
\[(0,\,1,\,1,\,0),\quad  (-63,\,44,\,44,\, 21),\]
\[(-53966863944,\, 3060077833,\, -43118821745,\,14262975216),\]
and
\[
\begin{aligned}
(&-147609072097506717422185174080259362584795541167553561,\\
&40226536355947814403673938708520812105643977659529244, \\
&111318341489401190419048903211940940726579266647898028,\\
&-77464352589959936491092062638791236314929817761076995).
\end{aligned}
 \]

Each solution of the above sequence yields a solution of  the nonhomogeneous diophantine equation,
\begin{equation}
d(m_1,\,m_2)=z^2, \label{discrimsqquarteqnex2}
\end{equation}
where $d(m_1,\,m_2)$ is given by \eqref{disquarteqnex2}. The values of $(m_1,\,m_2,\,z)$ for the first three solutions of Eq.~\eqref{discrimsqquarteqnex2}, corresponding to the respective solutions of Eq.~\eqref{quarteqnex2}, are given by 
\[
\begin{array}{c}
(0, \,1/3,\,7), \quad (-1848/2335,\, 1537/7005,\,122166401/5452225),\\[0.2in]
(\dfrac{ 732016714554891233832}{ 925123942970233269817},\, -\dfrac{ 858029179234358903561}{2775371828910699809451},\\[0.2in]
\dfrac{ 14804259873811552782039788426057859064445551}{ 855854309856791419406657915087787523213489}).
\end{array}
\]

The surface defined by  the quartic equation \eqref{quarteqnex2}  has no singularities.  It is a K3 surface whose  arithmetic genus and  geometric genus are both 1. These computations were done using the online MAGMA calculator \cite{Mg}.

\section{Sextic diophantine equations}\label{sexticeqn}

As in the case of quartic equations, we will first construct, in Section 4.1, a very general sextic equation that can be solved by the method described in Section 2. In Sections 4.2 and 4.3, we will discuss specific numerical examples of sextic equations.
 
\subsection{A general sextic equation}

We first define one quadratic and two cubic forms in the variables $x_1,\,x_2,\,x_3,$ $x_4,$ as follows:
\begin{equation}
\begin{aligned}
Q(x_1,\,x_2)&=x_1^2+px_1x_2+qx_2^2,\\
C_1(x_1,\,x_2,\,x_3,\,x_4)&=x_1^2x_3+qx_2^2x_3+pqx_2^2x_4,\\
C_2(x_1,\,x_2,\,x_3,\,x_4)&=x_1^2x_4-px_2^2x_3-(p^2-q)x_2^2x_4,
 \end{aligned}
\label{sextfms}
\end{equation}
where $p,\,q$ are arbitrary parameters. If we write 
\begin{equation}
x_3=m_1x_1+(pm_1+qm_2)x_2, \quad x_4=m_2x_1-m_1x_2, \label{sextsubs1}
\end{equation}
where $m_1$ and $m_2$ are arbitrary parameters,  we have the following identities:
\begin{equation}
\begin{aligned}
Q(x_3,\,x_4)&=Q(m_1,\,m_2)Q(x_1,\,x_2),\\
C_1(x_1,\,x_2,\,x_3,\,x_4)&=(m_1x_1+m_2qx_2)Q(x_1,\,x_2),\\
C_2(x_1,\,x_2,\,x_3,\,x_4)&=((m_2x_1-(m_1+pm_2)x_2)Q(x_1,\,x_2).\\
 \end{aligned} 
\label{sextsubs1ident}
\end{equation}

Let us now consider  the equation,
\begin{equation}
S(x_1,\,x_2,\,x_3,\,x_4)=0, \label{sexteqngen}
\end{equation}
where $S(x_1,\,x_2,\,x_3,\,x_4)$ is a sextic form defined by,
\begin{multline}
S(x_1,\,x_2,\,x_3,\,x_4)=a_1C_1^2(x_i)+a_2C_1(x_i)C_2(x_i)+a_3C_2^2(x_i)\\
+Q(x_3,\,x_4)[x_2\{a_4C_1(x_i)+a_5C_2(x_i)\}
+x_4\{a_6C_1(x_i)+a_7C_2(x_i)\}]\\
+(a_8x_1^2+a_9x_2^2+a_{10}x_2x_3+a_{11}x_2x_4+a_{12}x_3^2+a_{13}x_3x_4+a_{14}x_4^2)Q^2(x_3,\,x_4),
\label{sextgen}
\end{multline}
with $a_j,\;j=1,\,2,\,\ldots,\,14$, being arbitrary integer coefficients.

We note that $x_1$ occurs only in even degrees in the forms $Q(x_1,\,x_2), C_1(x_i)$ and $C_2(x_i)$, and it is easily observed that this is also true for the form $S(x_i)$. It follows that $S(x_1,\,x_2,\,x_3,\,x_4)=S(-x_1,\,x_2,\,x_3,\,x_4)$. Thus, the form $S(x_1,\,x_2,\,x_3,\,x_4)$ satisfies the property ${\bf D_2}$. Further, in view of the relations \eqref{sextsubs1ident}, it is also easily observed that when we substitute the values of $x_3,\,x_4$ given by \eqref{sextsubs1} in Eq.~\eqref{sexteqngen},  the sextic equation \eqref{sexteqngen} reduces to an equation of type \eqref{deqsubs1} in which $\psi(x_1,\,x_2)=Q^2(x_1,\,x_2)$. 

The parameters $p,\,q$ and the coefficients $a_j$ can easily be chosen such that   the property ${\bf D_1}$ and the remaining conditions stipulated in property ${\bf D_3}$ are  satisfied. Thus, starting from a known solution of Eq.~\eqref{sexteqngen},  we can find a sequence of rational points  on the surface  \eqref{sexteqngen}. 

We note that  Eq.~\eqref{sexteqngen} does not have any terms of the type $x_1^{6-j}x_2^j,\;j=0,\,1,\ldots,\,6$ and accordingly,  the surface represented by \eqref{sexteqngen} has singularities at $(\alpha_1,\,\alpha_2,\,0,\,0)$ for any arbitrary values of $\alpha_1,\,\alpha_2$. We would naturally prefer to construct sextic equations in which all the terms $x_i^6,\;i=1,\,2,\,3,\,4$, are present but all efforts to  do so were futile. 

In the next two subsections, we discuss special cases of Eq.~\eqref{sexteqngen}.

\subsection{}
In Eq.~\eqref{quarteqngen}, we take 
\[
\begin{aligned}
&p&=&1,&\;\; &q&=&-1,&\;\;&a_1&=&3,&\;\;&a_2&=&-1,&\\
&a_3&=&5,&\;\;&a_4&=&44957,&\;\;&a_5&=&6,&\;\;&a_6&=&1,&\\
&a_7&=&2939,&\;\;&a_8&=&29654,&\;\;&a_9&=&13121,&\;\;&a_{10}&=&2,&\\
 &a_{11}&=&-25057,&\;\;&a_{12}&=&-7856,& &a_{13}&=& -8176,&\;\;&a_{14}&=&891,&
\end{aligned}\] 
when we get the following  sextic equation:
\begin{multline}
(3x_3^2-x_3x_4+5x_4^2)x_1^4-(5x_2^2x_3^2+13x_2^2x_3x_4+19x_2^2x_4^2-44957x_2x_3^3\\
-44963x_2x_3^2x_4+44951x_2x_3x_4^2+6x_2x_4^3-29654x_3^4-59309x_3^3x_4\\
+26714x_3^2x_4^2+56370x_3x_4^3-26715x_4^4)x_1^2+7x_2^4x_3^2+23x_2^4x_3x_4\\
+21x_2^4x_4^2-44963x_2^3x_3^3-89932x_2^3x_3^2x_4-6x_2^3x_3x_4^2+44969x_2^3x_4^3\\
+13121x_2^2x_3^4+23302x_2^2x_3^3x_4-21940x_2^2x_3^2x_4^2-29181x_2^2x_3x_4^3\\
+19000x_2^2x_4^4+2x_2x_3^5-25053x_2x_3^4x_4-50116x_2x_3^3x_4^2+25053x_2x_3^2x_4^3\\
+50116x_2x_3x_4^4-25057x_2x_4^5-7856x_3^6-23888x_3^5x_4-7605x_3^4x_4^2\\
+25670x_3^3x_4^3+7605x_3^2x_4^4-9958x_3x_4^5+891x_4^6=0.
\label{sexteqnex1}
\end{multline}

It is easily verified that Eq.~\eqref{sexteqnex1} satisfies the three properties ${\bf D_1}$, ${\bf D_2}$ and ${\bf D_3}$, and a solution of Eq.~\eqref{sexteqnex1}  is given by $(0,\,1,\,0,\,-1)$. 

Now,  starting from the point $P=(0,\,1,\,0,\,-1)$ and  repeatedly applying the $RC$ operation, we   obtain a sequence of points $P,\,(RC)P,\,(RC)^2P,\,\dots,\,$ on the surface \eqref{sexteqnex1}. It turns out that $P$ is a point of  order $11$, and we get just  eleven distinct rational points as follows:
\[
\begin{aligned}
P&=&(0,\,1,\,0,\,-1),\quad &(RC)P&=&(1,\,1,\,-1,\,-2),\\
(RC)^2P&=&(  -2,\,   1,\,   -4,\,   -7), \quad &(RC)^3P&=&(  -1,\,   0,\,   1,\,   3) ,\\
(RC)^4P&=&(  -1,\,   2,\,   3,\,   -1), \quad &(RC)^5P&=&(  -3,\,   2,\,   7,\,   -5) ,\\
(RC)^6P&=& (  3,\,   2,\,   7,\,   -5), \quad &(RC)^7P&=& (  1,\,   2,\,   3,\,   -1),\\
(RC)^8P&=&(  1,\,   0,\,   1,\,   3), \quad &(RC)^9P&=& (  2,\,   1,\,   -4,\,   -7),\\
(RC)^{10}P&=&(  -1,\,   1,\,   -1,\,   -2) , \quad &(RC)^{11}P&=&(  0,\,   1,\,   0,\,   -1) =P.
\end{aligned}
\]

It would be recalled that there cannot be a point of order $11$ on an elliptic curve (see Mazur's theorem \cite[p. 57]{Si}).  It follows from the above example  that the $RC$ operation of generating a sequence of  rational points on a surface defined by an equation of type \eqref{gendeq4var} is  unrelated to the process of finding  a sequence of  rational points on an elliptic curve by repeated addition of a known rational point.

\subsection{} 
As a second example, in Eq.~\eqref{sexteqngen} we take, 
\[
\begin{aligned}
&p&= &1,\;\; &q&= &3,\;\;&a_1&= &44,\;\;&a_2&= &-36,\\
&a_3&= &36,\;\;&a_4&= &120,\;\;&a_5&= &-72,\;\;&a_6&= &0,\\
&a_7&= &0,\;\;&a_8&= &0,\;\;&a_9&= &-9,\;\;&a_{10}&= &-972,\\
&a_{11}&= &-486,\;\;&a_{12}&= &9,\;\;&a_{13}&= &9,\;\;&a_{14}&= &9,
\end{aligned}\]
when we get the sextic equation,
\begin{multline}
(44x_3^2-36x_3x_4+36x_4^2)x_1^4+(300x_2^2x_3^2+12x_2^2x_3x_4\\
+36x_2^2x_4^2+120x_2x_3^3+48x_2x_3^2x_4+288x_2x_3x_4^2-216x_2x_4^3)x_1^2\\
+540x_2^4x_3^2+540x_2^4x_3x_4+324x_2^4x_4^2+432x_2^3x_3^3\\
+648x_2^3x_3^2x_4+1512x_2^3x_3x_4^2+648x_2^3x_4^3-9x_2^2x_3^4\\
-18x_2^2x_3^3x_4-63x_2^2x_3^2x_4^2-54x_2^2x_3x_4^3-81x_2^2x_4^4\\
-972x_2x_3^5-2430x_2x_3^4x_4-7776x_2x_3^3x_4^2-9234x_2x_3^2x_4^3\\
-11664x_2x_3x_4^4-4374x_2x_4^5+9x_3^6+27x_3^5x_4+90x_3^4x_4^2\\
+135x_3^3x_4^3+198x_3^2x_4^4+135x_3x_4^5+81x_4^6=0. \label{sexteqnex2}
\end{multline}

It is easily verified that Eq.~\eqref{sexteqnex2} satisfies the three properties ${\bf D_1}$, ${\bf D_2}$ and ${\bf D_3}$, and a solution of Eq.~\eqref{sexteqnex2}  is given by $(0,\,1,\,1,\,0)$. Thus we may generate a sequence of rational points on the surface \eqref{sexteqnex2} taking $(0,\,1,\,1,\,0)$ as the initial solution. 

On substituting
\begin{equation}
x_3 = m_1x_1+(m_1+3m_2)x_2,\quad x_4 = m_2x_1-m_1x_2,
\end{equation}
Eq.~\eqref{sexteqnex2} reduces, after removing the irreducible factor $(x_1^2+x_1x_2+3x_2^2)^2$, to
\begin{multline}
 (9m_1^6+27m_1^5m_2+90m_1^4m_2^2+135m_1^3m_2^3+198m_1^2m_2^4\\
+135m_1m_2^5+81m_2^6+44m_1^2-36m_1m_2+36m_2^2)x_1^2\\
+(9m_1^6+63m_1^5m_2+180m_1^4m_2^2+423m_1^3m_2^3+540m_1^2m_2^4\\
+567m_1m_2^5+243m_2^6-972m_1^5-2430m_1^4m_2-7776m_1^3m_2^2\\
-9234m_1^2m_2^3-11664m_1m_2^4-4374m_2^5+120m_1^3+48m_1^2m_2\\
+288m_1m_2^2-216m_2^3+36m_1^2+228m_1m_2-180m_2^2)x_1x_2\\
+(9m_1^6+45m_1^5m_2+198m_1^4m_2^2+405m_1^3m_2^3+810m_1^2m_2^4\\
+729m_1m_2^5+729m_2^6-486m_1^5-3888m_1^4m_2-9234m_1^3m_2^2\\
-23328m_1^2m_2^3-21870m_1m_2^4-26244m_2^5-9m_1^4-18m_1^3m_2\\
-63m_1^2m_2^2-54m_1m_2^3-81m_2^4+72m_1^3+504m_1^2m_2\\
+648m_1m_2^2+1296m_2^3+36m_1^2+180m_1m_2+540m_2^2)x_2^2=0. \label{sexteqnex2subs1}
\end{multline}

We will now show that the total number of  all the rational self-conjugate points and the rational invariant points on the surface \eqref{sexteqnex2} is an odd integer, and then apply Lemma~\ref{infinitepts} to prove that there are infinitely many  integer solutions of Eq.~\eqref{sexteqnex2}. 

 If there exist any  rational self-conjugate points on the surface \eqref{sexteqnex2}, there must exist rational numbers $m_1$ and $m_2$ such that Eq.~\eqref{sexteqnex2subs1} has two coincident roots. Thus, the  discriminant of Eq.~\eqref{sexteqnex2subs1} must be 0. The discriminant of Eq.~\eqref{sexteqnex2subs1} is given by $-9(m_1^2+m_1m_2+3m_2^2)^2d(m_ 1,\,m_2)$ where
\begin{multline}
d(m_1,\,m_2)=27m_1^8+108m_1^7m_2+486m_1^6m_2^2+1080m_1^5m_2^3\\
+2457m_1^4m_2^4+3240m_1^3m_2^5+4374m_1^2m_2^6+2916m_1m_2^7\\
+2187m_2^8-2916m_1^6m_2-8748m_1^5m_2^2-34992m_1^4m_2^3\\
-55404m_1^3m_2^4-104976m_1^2m_2^5-78732m_1m_2^6-78732m_2^7\\
-105012m_1^6-315036m_1^5m_2-971388m_1^4m_2^2-1417716m_1^3m_2^3\\
-1759140m_1^2m_2^4-1102788m_1m_2^5-236520m_2^6+48m_1^5\\
+1008m_1^4m_2+3120m_1^3m_2^2+7056m_1^2m_2^3+8208m_1m_2^4\\
+6480m_2^5+26168m_1^4+23760m_1^3m_2+69576m_1^2m_2^2\\
-13104m_1m_2^3-18792m_2^4-1728m_1^3+3888m_1^2m_2\\
+24624m_1m_2^2-66096m_2^3-1776m_1^2+2064m_1m_2\\
-720m_2^2+448m_1+1344m_2+560.\quad \quad \quad 
\label{dissexteqnex2}
\end{multline}
 Since $(m_1^2+m_1m_2+3m_2^2)^2 \neq 0$, we must have $d(m_1,\,m_2)=0$.  As $m_1$ and $m_2$ are rational numbers, we write $m_1=n_1/n_0,\;m_2=n_2/n_0$ where $n_1,\,n_2$ and $n_0$ are integers such that $n_0 \neq 0$ and ${\rm gcd}(n_0,\,n_1,\,n_2)=1$, and now, the condition $d(m_1,\,m_2)=0$ may be written as follows:
\begin{multline}
560n_0^8+448(n_1+3n_2)n_0^7-48(37n_1^2-43n_1n_2+15n_2^2)n_0^6\\
-432(4n_1^3-9n_1^2n_2-57n_1n_2^2+153n_2^3)n_0^5\\
+8(3271n_1^4+2970n_1^3n_2+8697n_1^2n_2^2-1638n_1n_2^3-2349n_2^4)n_0^4\\
+48(n_1^2+n_1n_2+3n_2^2)(n_1^3+20n_1^2n_2+42n_1n_2^2+45n_2^3)n_0^3\\
-36(2917n_1^2+2917n_1n_2+730n_2^2)(n_1^2+n_1n_2+3n_2^2)^2n_0^2\\
-2916n_2(n_1^2+n_1n_2+3n_2^2)^3n_0+27(n_1^2+n_1n_2+3n_2^2)^4=0.
\label{conddissexteqnex2}
\end{multline}

We note that in Eq.~\eqref{conddissexteqnex2}, all terms except the last are even integers, hence $(n_1^2+n_1n_2+3n_2^2)$ must be  an even integer and it readily follows that both $n_1$ and $n_2$ must be even integers. On substituting $n_1=2n_3$ and $n_2=2n_4$ in Eq.~\eqref{conddissexteqnex2}, we get the following equation:
\begin{multline}
35n_0^8+56(n_3+3n_4)n_0^7-12(37n_3^2-43n_3n_4+15n_4^2)n_0^6\\
-216(4n_3^3-9n_3^2n_4-57n_3n_4^2+153n_4^3)n_0^5+8(3271n_3^4\\
+2970n_3^3n_4+8697n_3^2n_4^2-1638n_3n_4^3-2349n_4^4)n_0^4\\
+96(n_3^2+n_3n_4+3n_4^2)(n_3^3+20n_3^2n_4+42n_3n_4^2+45n_4^3)n_0^3\\
-144(2917n_3^2+2917n_3n_4+730n_4^2)(n_3^2+n_3n_4+3n_4^2)^2n_0^2\\
-23328n_4(n_3^2+n_3n_4+3n_4^2)^3n_0+432(n_3^2+n_3n_4+3n_4^2)^4=0.
\label{conddissexteqnex2var}
\end{multline}

We note that in Eq.~\eqref{conddissexteqnex2var}, all terms except the first are even integers, hence $n_0$ must  an even integer. This is a contradiction since ${\rm gcd}(n_0,\,n_1,\,n_2)=1$. Hence Eq.~\eqref{conddissexteqnex2} has no integer solutions. It follows that there are no rational self-conjugate points on the surface \eqref{sexteqnex2}.

Next we determine the number of rational invariant points on the surface \eqref{sexteqnex2}. These points are precisely the rational points on \eqref{sexteqnex2} satisfying the conditions $x_1=0$ and $x_2 \neq 0$. On substituting $x_1=0$ in \eqref{sexteqnex2}, we get the condition,
\begin{multline}
 60x_2^4x_3^2+60x_2^4x_3x_4+36x_2^4x_4^2+48x_2^3x_3^3
+72x_2^3x_3^2x_4+168x_2^3x_3x_4^2\\+72x_2^3x_4^3-x_2^2x_3^4
-2x_2^2x_3^3x_4-7x_2^2x_3^2x_4^2-6x_2^2x_3x_4^3-9x_2^2x_4^4
-108x_2x_3^5\\-270x_2x_3^4x_4-864x_2x_3^3x_4^2-1026x_2x_3^2x_4^3
-1296x_2x_3x_4^4-486x_2x_4^5\\+x_3^6+3x_3^5x_4+10x_3^4x_4^2
+15x_3^3x_4^3+22x_3^2x_4^4+15x_3x_4^5+9x_4^6=0.\label{sextcondRP}
\end{multline}

Eq.~\eqref{sextcondRP} represents a curve of genus 5 in projective space and hence it has a finite number of integer solutions. It is readily verified that if $(x_2,\,x_3,\,x_4)$ $=(\alpha_2,\,\alpha_3,\,\alpha_4)$ is any integer  solution of \eqref{sextcondRP} with $\alpha_2 \neq 0$ and $\alpha_4 \neq 0$, then another distinct integer solution of \eqref{sextcondRP} is given by $(x_2,\,x_3,\,x_4)=(\alpha_2,\,\alpha_3+\alpha_4,\,-\alpha_4)$. Thus, integer solutions of \eqref{sextcondRP} can be paired off except for those solutions in which $x_4=0$. On substituting $x_4=0$ in \eqref{sextcondRP}, we get 
\begin{equation}
x_3^2(x_2-x_3)(60x_2^3+108x_2^2x_3+107x_2x_3^2-x_3^3)=0. \label{sextcondRP1}
\end{equation}
We note that we cannot take $x_3=0$ as a solution of Eq.~\eqref{sextcondRP1} since both $x_3$ and $x_4$ cannot be simultaneously 0. Thus, we get just one solution of \eqref{sextcondRP} with $x_2 \neq 0$ and  $x_4=0$, namely $( x_2,\,x_3,\,x_4)=(1,\,1,\,0)$. This yields the single   rational invariant point $(0,\,1,\,1,\,0)$ on the surface \eqref{sexteqnex2}. As the other rational invariant points occur in pairs, this shows that there are an odd  number of  rational invariant points on the surface \eqref{sexteqnex2}.

As there are no self-conjugate points on the surface \eqref{sexteqnex2}, it follows that the total number of rational self-conjugate points and rational invariant points is an odd integer. It now follows from Lemma~\ref{infinitepts} that there are infinitely many rational points on the surface \eqref{sexteqnex2}.

We now obtain a sequence of rational points on the surface \eqref{sexteqnex2} by repeatedly applying the $RC$ operation starting from the point $(0,\,1,\,1,\,0)$. The first three points of the sequence are 
\[
\begin{aligned}
& & (0,\,1,\,1,\,0),\quad  ( -411,\,  37,\,  37,\,  137),\quad \quad\quad \quad\quad \quad \quad \quad\\
&{\rm and} \quad &( 3112824595430551806, \, 686796656401231307,\quad \quad\quad \quad \\
&&\; -183526740019270303,  1115958479906433472).\quad \quad\quad \quad
\end{aligned}
\]
The coordinates of the next point of the sequence are given by integers consisting of 112 and 113 digits, and are accordingly omitted.

The nonhomogeneous diophantine equation obtained by equating the discriminant of Eq.~\eqref{sexteqnex2subs1} to 0 reduces to the equation,
\begin{equation}
d(m_1,\,m_2)+z^2=0. \label{sexteqnex2diseq}
\end{equation}
The values of $(m_1,\,m_2,\,z)$ for the first three solutions of Eq.~\eqref{sexteqnex2diseq}, corresponding to the three rational points on the surface \eqref{sexteqnex2} obtained above, are given by
\[(0,\,1/3,\,137/3),\quad (-10138/52607,\, -16623/52607,\,50623360597/2767496449)\]
and
\[\begin{aligned}
m_1&=-\dfrac{ 45750198554437655354467688322514}{ 211054640652710654187709543384385},\\[0.2in]
 m_2& = \dfrac{ 65569750657565122906946320140387}{ 211054640652710654187709543384385},\\[0.2in]
z&=(18163941046483611219374374579932547716900051\\& \quad \;\;
58819338555544586339749)/(44544061341044824713\\ & \quad \;\; 534258297259269276961486153154314519861828225).
\end{aligned}
\]

\section{Octic and higher degree equations}\label{octic}
  We now show how to construct a diophantine equation, of arbitrarily high even degree, that can be solved by the method described in Section 2. We will use the quadratic forms $Q(x_1,\,x_2)$, $Q_j(x_1,\,x_2,\,x_3,\,x_4),\;j=1,\,2$ and the quartic form $F(x_1,\,x_2,\,x_3,\,x_4)$ defined by \eqref{Mdefqd} and \eqref{quartgen} to construct a  quaternary form $G(x_1,\,x_2,\,x_3,\,x_4)$ of degree $2d$ where $d$ is an arbitrary integer. 
	
	Let $Q_{j+3}(x_1,\,x_2,\,x_3,\,x_4),\;j=0,\,1,\,\ldots,\,d-1$ be arbitrary quadratic forms in the variables $x_1,\,x_2,\,x_3,\,x_4$ such that $x_1$ occurs only in degree 2 in each of these forms. Let
	\begin{multline}
	G(x_1,\,x_2,\,x_3,\,x_4)=\sum_{j=0}^{d-1}Q^{d-1-j}(x_1,\,x_2)Q^j(x_3,\,x_4)Q_{j+3}(x_i)\\
	+\left\{\sum_{j=0}^{d-2}b_jQ^{d-2-j}(x_1,\,x_2)Q^j(x_3,\,x_4)\right\}F(x_1,\,x_2,\,x_3,\,x_4), \quad \quad  \label{defG}
	\end{multline}
	where $b_j,\;j=0,\,1,\,\ldots,\,d-2$ are arbitrary integers. 
	
	We now consider the diophantine equation,
	\begin{equation}
	G(x_1,\,x_2,\,x_3,\,x_4)=0. \label{eqGgen}
	\end{equation}

If  we substitute  the values of $x_3$ and $x_4$ given by \eqref{Msubs1} in Eq.~\eqref{eqGgen}, it follows from the relations \eqref{Mrelqd} and \eqref{quartgen} that  Eq.~\eqref{eqGgen} reduces to an equation of type \eqref{deqsubs1} where $\psi(x_1,\,x_2)$ is, in fact, given by $Q^{d-1}(x_1,\,x_2)$. We also note that if we take $p=0$, then $x_1$ occurs only in even degrees in  the forms $Q(x_1,\,x_2),\,Q_{j+3}(x_i)$ and $F(x_i)$, and therefore $G(x_1,\,x_2,\,x_3,\,x_4)=G(-x_1,\,x_2,\,x_3,\,x_4)$. Finally, we can readily choose the forms $Q_{j+3}(x_i)$, the integers $b_j$  and the arbitrary integer coefficients of the form $F(x_1,\,x_2,\,x_3,\,x_4)$ such that Eq.~\eqref{eqGgen}  has a solution in integers and the remaining conditions stipulated in property ${\bf D_3}$ are  satisfied. 

Thus we can construct a diophantine  equation, of arbitrary degree $2d$,  satisfying all the three properties ${\bf D_1, D_2}$ and ${\bf D_3}$, and we can obtain a sequence of rational solutions starting from a known solution. 

As a numerical example, in \eqref{defG}, we take $d=5,\,p=0,\,q=2,\;Q_3(x_i)=x_1^2+x_2^2,\;Q_4(x_i)=2x_1^2+3x_2^2,\;Q_5(x_i)=Q_6(x_i)=0,\;Q_7(x_i)=-(x_3^2+2x_4^2)$, 
and the coefficients $a_j$ of the form $F(x_1,\,x_2,\,x_3,\,x_4)$ as follows:
\[
\begin{aligned}
a_1 &=& 1,\;\; &a_2 &=& -1,\;\; &a_3 &=& -2,\;\; &a_4 &=& h,\\
a_5 &=& h,\;\; &a_6 &=& -1,\;\; &a_7 &=& -2,\;\; &a_8 &=& 2h,\\
a_9 &=& 0,\;\;  &a_{10} &=& -1,\;\; &a_{11} &=& 0,\;\; &a_{12} &=& -2,
\end{aligned}
\]
where $h$ is an arbitrary parameter.

We thus get the following  equation of degree 10:
\begin{multline}
(x_1^2+2x_2^2)^4(x_1^2+x_2^2)+(x_1^2+2x_2^2)^3(x_3^2+2x_4^2)(2x_1^2+3x_2^2)\\
-(x_3^2+2x_4^2)^5+(x_1^2+2x_2^2)^3\{x_1^4+4x_1^2x_2^2+4x_2^4\\
-4x_2^2x_3^2-8x_2^2x_4^2-x_3^4-4x_3^2x_4^2-4x_4^4\\
+2hx_2(x_1^2x_3-2x_1^2x_4+2x_2^2x_3-4x_2^2x_4+x_3^3+2x_3x_4^2)\}=0. \label{decic}
\end{multline}

It is readily verified that Eq.~\eqref{decic} satisfies the three properties ${\bf D_1}$, ${\bf D_2}$ and ${\bf D_3}$ and for any arbitrary value of $h$, $(x_1,\,x_2,\,x_3,\,x_4)=(1,\,1,\,1,\,1)$ is a solution of Eq.~\eqref{decic}.

We may now repeatedly apply the $RC$ operation and find a sequence of rational points on the surface \eqref{decic}. The coordinates of these points are rational functions of $h$. As in the case of Eq.~\eqref{quarteqnex1}, we will now prove that given an arbitrary positive integer $n$, howsoever large, there exists an integer value of $h$ such that these $n$ points are distinct. The proof  is similar to the one given in case of Eq.~\eqref{quarteqnex1}.

When $h=0$, Eq.~\eqref{decic} reduces to the following equation:
\begin{multline}
(x_1^2+2x_2^2)^4(x_1^2+x_2^2)+(x_1^2+2x_2^2)^3(x_3^2+2x_4^2)(2x_1^2+3x_2^2)\\
-(x_3^2+2x_4^2)^5+(x_1^2+2x_2^2)^3(x_1^4+4x_1^2x_2^2+4x_2^4\\
-4x_2^2x_3^2-8x_2^2x_4^2-x_3^4-4x_3^2x_4^2-4x_4^4)=0. \label{decic1}
\end{multline}
 On substituting 
\begin{equation}
x_3 = m_1x_1+2m_2x_2,\quad  x_4 = m_2x_1-m_1x_2, \label{decic1subs1}
\end{equation}
 Eq.~\eqref{decic1} reduces, after removing the factor $(x_1^2+2x_2^2)^4$ to the following equation:
\begin{multline}
(m_1^2+2m_2^2+1)(m_1^8+8m_1^6m_2^2+24m_1^4m_2^4+32m_1^2m_2^6+16m_2^8\\
-m_1^6-6m_1^4m_2^2-12m_1^2m_2^4-8m_2^6+m_1^4+4m_1^2m_2^2+4m_2^4-2)x_1^2\\
+(2m_1^{10}+20m_1^8m_2^2+80m_1^6m_2^4+160m_1^4m_2^6+160m_1^2m_2^8\\
+64m_2^{10}+2m_1^4+8m_1^2m_2^2+8m_2^4+m_1^2+2m_2^2-3)x_2^2=0. \label{decic1red}
\end{multline}

Let a rational point $P$ on the surface \eqref{decic1} be given by $(3^t,\,3^t,\,\alpha,\,\beta,)$ where $\alpha,\,\beta$ are integers such that $\alpha \equiv 1\; (\bmod \; 3)$ , $\beta \equiv 1\; (\bmod \; 3)$, and $t$ is  a nonnegative integer. The values of $m_1,\,m_2$ corresponding to the point $P$ are given by 
\begin{equation}
m_1= (\alpha-2\beta)/3^{t+1},\;\; m_2= (\alpha+\beta)/3^{t+1}.
\end{equation}

With these values of $m_1, \,m_2$,  we now work out the conjugate $C(P)$ of the point $P$.   It follows from \eqref{decic1red} that
\[
\frac{x_1(P)}{x_2(P)}+\frac{x_1(CP)}{x_2(CP)} = 0.
\]

Since $x_1(P)=x_2(P)=3^k$, we get $-x_1(CP)=x_2(CP)=k$ where $k$ is an arbitrary integer, and we obtain the values of  $x_3(CP),\,x_4(CP)$ using the relations \eqref{decic1subs1}. We thus obtain the point $C(P)$, and on taking the reflection of this point, we  obtain the point $RC(P)$ which is given by $(3^{t+1},\,3^{t+1},\,\alpha^{\prime},\,\beta^{\prime})$ where $\alpha^{\prime} \equiv -1 (\bmod \; 3)$, $\beta^{\prime} \equiv -1 (\bmod \; 3)$. On applying the $RC$ operation once again, we get  the point $(RC)^2(P)$ which is given by $(3^{t+2},\,3^{t+2},\,\alpha^{\prime \prime},\,\beta^{\prime \prime})$ where $\alpha^{\prime \prime} \equiv 1 (\bmod \; 3)$, $\beta^{\prime \prime} \equiv 1 (\bmod \; 3)$ and it now follows by induction that the sequence of points on the surface  \eqref{decic1},  obtained by starting from $(1,\,1,\,1,\,1)$ as the initial point $P_0$, consists of infinitely many distinct rational points. The first four points of the sequence obtained in this manner are $P_0=(1,\,1,\,1,\,1)$, $P_1=(3,\,3,\,5,\,-1)$, $P_2=(3^2,\,3^2,\,1,\,-11)$, $P_3=(3^3,\,3^3,\,-43,\,-13)$.

Now reverting to the original equation, an argument similar to the one given in case of Eq.~\eqref{quarteqnex1} establishes that for any given arbitrary value of $n$, howsoever large, there exist infinitely many  integer values of $h$ such that Eq.~\eqref{decic} has at least $n$ distinct rational solutions.

We note if the repeated application of the $RC$ operation yields an infinite sequence of rational points on a surface defined by an equation of type \eqref{eqGgen} where we take $p=0$, there will  always be a  curve of genus 0 or 1 on such a  surface. To prove this, we observe from the first identity given by \eqref{Mrelqd} that the value of the ratio $(x_3^2+qx_4^2)/(x_1^2+qx_2^2) =m_1^2+qm_2^2$ is the same for the points $P$ and $C(P)$ since any point and its conjugate are obtained from an equation of type \eqref{deqsubs1red} and therefore correspond to the same values of $m_1$ and $m_2$. Moreover, when we take the reflection of the point $C(P)$, then also the ratio  $(x_3^2+qx_4^2)/(x_1^2+qx_2^2)$ remains unchanged. Thus, the ratio $(x_3^2+qx_4^2)/(x_1^2+qx_2^2)$ remains unchanged for any point $P$ and $RC(P)$, and hence it remains constant for all the points of the infinite  sequence of rational points $P,\;RC(P),\;(RC)^2P,\;\ldots$  . Thus, all of these rational points lie both on the surface \eqref{eqGgen} as well as on the surface $(x_3^2+qx_4^2)=k(x_1^2+qx_2^2)$ where $k$ is some constant, and hence they lie also on the intersection of these two surfaces. This intersection is a curve. Thus the infinitely many rational points $(RC)^n(P),\;n=0,\,1,\,2,\,\ldots,\,$ lie on a curve which, in view of Falting's theorem,  must necessarily be a curve of genus 0 or 1.

In fact, for any rational number $h$, all the rational points on the surface \eqref{decic} found by our method lie on the intersection of the surface \eqref{decic} and the quadric $x_1^2+2x_2^2=x_3^2+2x_4^2$, and are given by,
\begin{equation}
\begin{aligned}
x_1&=4hX^3-2X^2+2XY+6hX+2h-1,\\
 x_2& = 4X^3+2hX^2+2hX+2X-Y-h, \\
x_3 &= -4hX^3+10X^2-2XY+2hX+2h-1,\\
 x_4& = 4X^3+6hX^2+2hX-4X+Y+h,
\end{aligned}
\end{equation}
where $(X,\,Y)$  is a rational point on the elliptic curve,
\begin{equation}
Y^2=4(h+1)^2X^4+8h(h-2)X^3+4X^2-4h(h+2)X+(h-1)^2.
\end{equation}
 
We also note that, as compared to Eq.~\eqref{eqGgen},  more general high degree diophantine equations, solvable by our method, can be constructed but Eq.~\eqref{eqGgen} suffices to illustrate  the manner in which this can be done. 
It may, however, be noted that efforts to construct such equations of degree $ \geq 8$ led only to such surfaces on which there is a curve of genus 0 or 1.

\section{Overview of an  extension of the new \\ method}\label{overviewextension}

\subsection{}
We now describe an extension of the method discussed in Section 2 and show how it can be used to solve certain single diophantine equations as well as   diophantine systems consisting of  several simultaneous  equations of the type,
\begin{equation}
f_r(x_1,\,x_2,\,\ldots,\,x_{n})=0,\;r=1,\,2,\,\ldots,\,k,\label{vsysgen} \\
\end{equation}
where $n \geq 5$,  $ k < n-2$ and $f_r(x_i)$ are forms, with integer coefficients,  in the $n$ independent variables $x_1,\,x_2,\,\ldots,\,x_{n}$, and at least one of the forms $f_r(x_i)$ is of degree $d \geq 4$. 

We denote by $V$ the set of all rational solutions of the Eqs~\eqref{vsysgen} and assume that the following properties, analogous to the three properties mentioned in Section 2, are satisfied:

\noindent ${\bf D_1^{\prime}:}$ At least one solution $(\alpha_1,\,\alpha_2,\,\ldots,\,\alpha_{n})$ of Eqs.~\eqref{vsysgen} is known such that $(\alpha_1,\,\alpha_2) \neq (0,\,0)$. 

\noindent ${\bf D_2^{\prime}:}$ There exists a bijective mapping $R: V \rightarrow V$ distinct from the identity mapping. 

\noindent ${\bf D_3^{\prime}:} $  It is possible to set up  $n-k-1$ auxiliary equations 
\begin{equation}
\psi_j(x_1,\,x_2,\, \ldots,\,x_{n},\,m_1,\,\ldots,\,m_{n-k-1})=0, \;j=1,\,2,\,\ldots,\,n-k-1, \label{vaux}
\end{equation}
 such that

\noindent (i) for rational numerical values of $x_1,\,x_2,\, \ldots,\,x_{n}$ such that $(x_1,\,x_2) \neq (0,\,0)$, Eqs.~\eqref{vaux} can be solved to obtain  rational numerical values, not all 0,  of the variables $m_1,\,\ldots,\,m_{n-k-1}$;

\noindent (ii) Eqs.~\eqref{vaux} can be solved together with  the equations $f_r(x_1,\,x_2,\,\ldots,\,x_{n})=$ $ 0,\;r=1,\,2,\,\ldots,\,k-1$ to obtain the values of $x_3,\,x_4,\,\ldots,\,x_n$ in terms of $x_1,\,x_2,\,m_1,\,m_2,\,\ldots,\,m_{n-k-1}$ and on substituting these values,   the last equation $f_k(x_1,\,x_2,\,\ldots,\,x_{n})=0$, reduces to an  equation of the type
\begin{equation}
\phi_0(m_i)x_1^2+\phi_1(m_i)x_1x_2+\phi_2(m_i)x_2^2=0,
\label{sysgen1redf}
\end{equation}
where $\phi_j(m_i),\;j=0,\,1,\,2$ are polynomials in $m_1,\,m_2,\,\ldots,\,m_{n-k-1}$ such that the simultaneous equations $\phi_j(m_i)=0,j=0,\,1,\,2$ do not have any rational solutions.

When the three properties ${\bf D_1^{\prime},\,D_2^{\prime},\, D_3^{\prime}}$ are satisfied, we can obtain a sequence of rational solutions of the simultaneous equations \eqref{vsysgen}.

The method of solving Eqs.~\eqref{vsysgen}  is similar to that of solving Eq.~\eqref{gendeq4var}. Given any point $P$ on the projective variety defined by Eqs.~\eqref{vsysgen}, we can, in general, obtain two new points on the variety --- the point $R(P)$ and the conjugate point $C(P)$ and we can, as before,  perform the $R$ and $C$ operations repeatedly to get new points on the variety. While $R(P)$ is immediately obtained from the definition of the mapping $R$, we will describe below how to find the conjugate of a given point $P$. The properties of the $R$ and $C$ operations and the definition of the order of a point $P$ discussed in Section 2 are also applicable in the case of simultaneous equations in several variables.   

Starting from a known rational point $ P$, with coordinates  $(\alpha_1,\,\alpha_2,\,\ldots,\,$ $ \alpha_{n})$,  on the variety \eqref{vsysgen},  we find the numerical values of $m_1,\,m_2,\,\ldots,\,m_{n-k-1}$ corresponding to the point $P$ by solving the $n-k-1$  equations,
\begin{equation}
\psi_j(\alpha_1,\,\alpha_2,\,\ldots,\,\alpha_n,\,m_1,\,\ldots,\,m_{n-k-1})=0, \;j=1,\,2,\,\ldots,\,n-k-1, \label{vauxalpha}
\end{equation}
and with these values of $m_i$ (which are not all 0),   the quadratic  equation \eqref{sysgen1redf} is necessarily solvable --- one solution corresponding to the known point $P$  is $(x_1,\,x_2)=(\alpha_1,\,\alpha_2)$, while the second solution gives the $x_1$ and $x_2$ coordinates of the conjugate point $C(P)$. These values of $x_1$ and $x_2$ together with the values of $m_1,\,m_2,\,\ldots,\,m_{n-k-1}$, yield the remaining coordinates $x_3,\,x_4,\,\ldots,\,x_n$ of the point $C(P)$. We thus obtain the conjugate point $C(P)$.

We can now obtain the point  $RC(P)$, and on repeatedly applying the $RC$ operation, we obtain a sequence of rational points, 
\begin{equation}
P,\;\;RC(P),\;\;(RC)^2(P),\;\;(RC)^3(P),\ldots,\,\;\;(RC)^k(P),\ldots,\;\;\label{vseqgen}
\end{equation}
on the variety defined by \eqref{vsysgen}. 

We note that when there are several mappings $R: V \rightarrow V$ as happens in the case of symmetric diophantine systems, by using these mappings separately or in combination with each other, we can, starting from a known rational point $P$, generate several sequences of rational points  on the variety defined by \eqref{vsysgen}. 

We have stipulated in property ${\bf D_3^{\prime}}$ that, for arbitrary numerical values of $x_i$ with $(x_1,\,x_2) \neq (0,\,0)$, the auxiliary equations \eqref{vaux}  should yield a nonzero solution for $m_i$ and also that the simultaneous equations $\phi_j(m_i)=0,j=0,\,1,\,2$ should not have any rational solutions.  We now note that we can generate the sequence of rational points \eqref{vseqgen} even when a weaker form of property ${\bf D_3^{\prime}}$ is satisfied by Eqs.~\eqref{vsysgen}. It is sufficient for the  $RC$ operation to  be executed successfully  at every stage if we choose the initial point $P$ and the auxiliary equations in such a manner  that the point $P$ and  the rational points of the sequence \eqref{vseqgen} successively generated at each stage  are such that when we substitute the values of the coordinates $x_i$ of these points in the auxiliary equations \eqref{vaux}, these equations can be solved to obtain rational values, not all 0, of the variables $m_i$, and with these values of $m_i$, the three coefficients $\phi_j(m_i),\, j=0,\,1,\,2$, of the resulting equation \eqref{sysgen1redf} do not vanish simultaneously. 

We note that even if we cannot prove in a certain case that the three coefficients $\phi_j(m_i),\, j=0,\,1,\,2$ in Eq.~\eqref{sysgen1redf}, do not vanish simultaneously, we  may still be able to generate a sequence of rational points on the variety \eqref{vsysgen}. However,  if there actually exist  rational values of $m_i, \,i=1,\,2,\,\ldots,\,$ $ n-k-1,$ such that the three coefficients  $\phi_j(m_i),\, j=0,\,1,\,2$, of Eq.~\eqref{sysgen1redf} vanish simultaneously, Eq.~\eqref{sysgen1redf} is identically satisfied with these values of $m_i$, and  we  get a parametric solution of Eqs.~\eqref{vsysgen}. Further, if we generate a sequence \eqref{vseqgen} of rational points on the variety \eqref{vsysgen} and at some stage, we get a rational point which yields these values   of $m_i$, the sequence terminates. As we are interested in constructing varieties that do not have a parametric solution and on which we can find an arbitrarily large number of rational points, it will be important to prove that there are no rational values of $m_i$ for which the three coefficients $\phi_j(m_i),\, j=0,\,1,\,2$,  vanish simultaneously.

If we can successfully generate an infinite sequence  of distinct rational points on  a given projective variety defined by Eqs.~\eqref{vsysgen} by the method described above, we also simultaneously obtain infinitely many rational solutions of Eq.~\eqref{sysgen1redf}, and hence also of the diophantine equation
\begin{equation}
\phi_1^2(m_i)-4\phi_0(m_i)\phi_2(m_i)=z^2, \label{vsysgenm}
\end{equation}
since Eq.~\eqref{vsysgenm} must be satisfied for Eq.~\eqref{sysgen1redf} to have rational solutions.

As an example of a  diophantine system of type \eqref{vsysgen}, we mention the system of equations,
\begin{align}
x_1+x_2+x_3&=x_4+x_5+x_6, \label{vex11}\\
x_1^3+x_2^3+x_3^3&=x_4^3+x_5^3+x_6^3, \label{vex13}\\
x_1^4+x_2^4+x_3^4&=x_4^4+x_5^4+x_6^4, \label{vex14}
\end{align}
for which a parametric solution is already known \cite[pp.\ 305-6]{Ch1}. It follows from Eqs.~\eqref{vex11} and \eqref{vex13} that
\begin{equation}
\begin{aligned}
(x_1+x_2-x_6)^3-(x_1^3+x_2^3-x_6^3)&=(x_4+x_5-x_3)^3-(x_4^3+x_5^3-x_3^3),\\
{\rm or,}\quad  3(x_1+x_2)(x_1-x_6)(x_2-x_6)&=3(x_4+x_5)(x_4-x_3)(x_5-x_3),
\end{aligned}
\end{equation}
and hence, if we write two auxiliary equations,
\begin{align}
x_1-x_6+m_1(x_3-x_4)&=0,\label{vex1aux1}\\
x_2-x_6+m_2(x_3-x_5)&=0,\label{vex1aux2}
\end{align}
and solve the four equations \eqref{vex11}, \eqref{vex13}, \eqref{vex1aux1}, \eqref{vex1aux2} for $x_3,\,x_4,\,x_5,\,x_6$, and substitute
these values in Eq.~\eqref{vex14}, we can reduce our diophantine system to  an equation of  type \eqref{sysgen1redf}. We note that in view of the symmetry of Eqs.~\eqref{vex11}, \eqref{vex13} and \eqref{vex14}, there are several choices for the mapping $R$. Thus starting from the parametric  solution of the simultaneous equations \eqref{vex11}, \eqref{vex13} and \eqref{vex14} given in \cite{Ch1}, we can obtain several  sequences of parametric solutions of this diophantine system. 

Sequences of solutions for several other solvable diophantine systems, such as  $x_1^r+x_2^r+x_3^r=x_4^r+x_5^r+x_6^r,\;r=1,\,5$ and $x_1^r+x_2^r+x_3^r=x_4^r+x_5^r+x_6^r,\;r=2,\,6$, may be obtained in a similar manner. As parametric solutions of these diophantine systems are already known (see, for instance, \cite{Br1}, \cite{Br2}, \cite{Ch2}, \cite{Ch3}), we will not obtain any new solutions of these systems. These   examples are mentioned here   just to illustrate the applicability of the general method to various diophantine systems. 

We will now focus  on constructing a  projective variety \eqref{vsysgen} on which we can find  an arbitrarily large number of  rational points and on which  the existence of a curve of genus 0 or 1   is not certain. In the next subsection we will  construct  projective varieties defined by equations of type \eqref{vsysgen} in which one of the forms $f_r(x_i)$ is of arbitrarily high degree while the other forms $f_r(x_i)$ are all linear forms.

\subsection{}

It will be convenient to consider projective varieties in $2n$ independent variables. We will accordingly consider varieties defined by equations of the type
\begin{align}
f(x_1,\,x_2,\,\ldots,\,x_{2n})&=0,\label{sysgen1} \\
L_j(x_1,\,x_2,\,\ldots,\,x_{2n})&=0,\;\;j=1,\,2,\ldots,\,n-2,\label{sysgenj}
\end{align}
where $f(x_i)$ is a form of degree $d \geq 4$  and $L_j(x_i),\;j=1,\,2,\ldots,\,n-2,$ are $n-2$ linear  forms in the $2n$ variables $x_1,\,x_2,\,\ldots,\,x_{2n}$.

We will  illustrate  how we can construct systems of simultaneous  equations \eqref{sysgen1} and \eqref{sysgenj} satisfying the three properties ${\bf D_1^{\prime},\,D_2^{\prime}, D_3^{\prime}}$ with the first equation \eqref{sysgen1} being of   arbitrarily high degree. 

We first obtain a composition of forms identity of the type, 
\begin{equation}
\psi(x_1,\,x_2,\,\ldots,\,x_n)\psi(m_1,\,m_2,\,\ldots,\,m_n)=\psi(x_{n+1},\,x_{n+2},\,\ldots,\,x_{2n}) \label{compidentgen}
\end{equation}
where $n \geq 3$ and $\psi(x_1,\,x_2,\,\ldots,\,x_n)$ is a form  in $n$ variables $x_1,\,x_2,\,\ldots,\,x_n$, and the values of the variables $x_{n+1},\,x_{n+2},\,\ldots,\,x_{2n}$ are given in terms of bilinear forms $B_j(m_i,\,x_i), j=1,\,2,\,\ldots,\,n$ in the variables $m_1,\,m_2,\,\ldots,\,m_n$ and $x_1,\,x_2,\,\ldots,\,x_n$. 

One way of obtaining  an identity of type \eqref{compidentgen}  is by   using algebraic integers belonging to the field $\mathbb{Q}(\rho)$  where $\rho$ is a root of the  equation,
\begin{equation}
x^n+p_1x^{n-1}+p_2x^{n-2}+\cdots+p_{n-1}x+p_n=0, \label{algeqn}
\end{equation}
with $p_1,\,p_2,\,\ldots,\,p_n$ being arbitrary rational integers such that Eq.~\eqref{algeqn} is irreducible. We will denote the norm of an algebraic integer $\xi$ by $N(\xi)$. If 
$\alpha,\,\beta,\,\gamma$ are three nonzero integers of the field $\mathbb{Q}(\rho)$ given by
\begin{equation}
\begin{aligned}
\alpha&=x_1+x_2\rho+x_3\rho^2+\cdots +x_n\rho^{n-1},\\
\beta&=m_1+m_2\rho+m_3\rho^2+\cdots +m_n\rho^{n-1},\\
\gamma&=x_{n+1}+x_{n+2}\rho+x_{n+3}\rho^2+\cdots +x_{2n}\rho^{n-1},
\label{defalphabeta}
\end{aligned}
\end{equation}
such that $\alpha \beta = \gamma$, then the values of $x_{n+1},\,x_{n+2},\,\ldots,\,x_{2n}$ are given by 
\begin{equation}
x_{n+j}= B_j(m_1,\,m_2,\,\ldots,\,m_n,\,x_1,\,x_2,\,\ldots,\,x_n),\;\; j=1,\,2,\,\ldots,\,n, \label{valxjbilinear}
\end{equation}
where $B_j(m_i,\,x_i),\; j=1,\,2,\,\ldots,\,n,$ are 
bilinear forms in the variables $m_1,\,m_2,\,$ $ \ldots,\,m_n$ and $x_1,\,x_2,\,$ $\ldots,\,x_n$ and  the  relation $N(\alpha)N(\beta) = N (\gamma)$ yields a composition of forms  identity of the type \eqref{compidentgen} in which $\psi(x_1,\,x_2,\,\ldots,\,x_n)= N(\alpha)$ is a form of  degree $n$.

 Next, for some positive integer $s$,  we obtain $s$ identities of the type,
\begin{equation}
\phi_j(x_1,\,x_2,\,\ldots,\,x_{2n})=\zeta_j(m_1,\,m_2,\,\ldots,\,m_n)\psi(x_1,\,x_2,\,\ldots,\,x_n),\label{sidentgen}
\end{equation}
where $\phi_j(x_1,\,x_2,\,\ldots,\,x_{2n}),\;j=1,\,2,\,\ldots,\,s$, are $s$ forms of degree $n$ in the $2n$ variables $x_1,\,x_2,\,\ldots,\,x_{2n}$ while $\zeta_j(m_i),\;j=1,\,2,\,\ldots,\,s$, are polynomials in $m_1,\,m_2,\,\ldots,\,m_n$ and the identities are  true when the values of $x_{n+1},\,x_{n+2},\,$ $ \ldots,\,$ $ x_{2n}$ are given by \eqref{valxjbilinear}.

We can readily obtain $n$ identities of type \eqref{sidentgen} by considering \eqref{valxjbilinear} as $n$ linear equations in the variables $m_1,\,m_2,\,\ldots,\,m_n$ and solving these equations for $m_1,\,m_2,\,\ldots,\,m_n$, when we get $m_j=\phi_j(x_i)/\psi(x_1,\,x_2,\,\ldots,\,x_n), \;j=1,\,2,\,\ldots,\,n,$ where $\phi_j(x_i)$ are forms of degree $n$ in the $2n$ variables $x_1,\,x_2,\,\ldots,\,$ $ x_{2n}$  and hence we immediately get $n$ identities of the type \eqref{sidentgen}. More identities of type \eqref{sidentgen} can also be obtained. 

We now construct Eq.~\eqref{sysgen1} in which we take the form $f(x_1,\,x_2,\,\ldots,\,x_{2n})$ as follows:
\begin{multline}
f(x_1,\,x_2,\,\ldots,\,x_{2n})=\psi(x_1,\,x_2,\,\ldots,\,x_n)Q_1(x_i)+\psi(x_{n+1},\,\ldots,\,x_{2n})Q_2(x_i)\\
\quad \;\;+ \sum_{j=1}^s\phi_j(x_1,\,x_2,\,\ldots,\,x_{2n})Q_{j+2}(x_i),\quad \quad \quad  \label{deffx}
\end{multline}
where $Q_j(x_i),\;j=1,\,2,\,\ldots,\,s+2$ are arbitrary quadratic forms in the variables $x_1,\,x_2,\,\ldots,\,x_{2n}$. We note that the form $f(x_1,\,x_2,\,\ldots,\,x_{2n})$ is of degree $n+2$ where $n$ is any arbitrary positive integer greater than 2.

We will take Eqs.~\eqref{valxjbilinear} as our $n$ auxiliary equations. When we  substitute the values of $x_{n+1},\,x_{n+2},\,\ldots,\,x_{2n}$  given by \eqref{valxjbilinear} in Eq.~\eqref{sysgen1}, in view of the identities \eqref{compidentgen} and \eqref{sidentgen}, we find that $\psi(x_1,\,x_2,\,$ $ \ldots,\,x_n)$ is a factor of $f(x_1,\,x_2,\,\ldots,\,x_{2n})$. We note that $\psi(x_1,\,x_2,\,\ldots,\,x_n)$, being the norm of a  nonzero algebraic integer, cannot be 0, and hence, on removing this factor,  Eq.~\eqref{sysgen1} reduces to
\begin{multline}
\quad \quad \quad Q_1(x_i)+\psi(m_1,\,m_2,\,\ldots,\,m_n)Q_2(x_i)\\
+\sum_{j=1}^s\zeta_j(m_1,\,m_2,\,\ldots,\,m_n)Q_{j+2}(x_i)=0.\quad \quad \quad \quad \quad  \label{sysgen1red}
\end{multline}
We will reduce this equation further after we have chosen the linear equations \eqref{sysgenj}. 

We will now illustrate how we can choose the form $\psi(x_1,\,x_2,\,\ldots,\,x_n)$ and  the quadratic forms $Q_j(x_i)$ in \eqref{deffx}, as well as  the linear equations \eqref{sysgenj} such that  the  diophantine system given by Eqs.~\eqref{sysgen1} and \eqref{sysgenj}   satisfies the  properties ${\bf D_1^{\prime}}, \,{\bf D_2^{\prime}} $ and ${\bf D_3^{\prime}} $. As a simple example, we  take the $n-2$ linear equations \eqref{sysgenj} as 
\begin{equation} 
x_j=0,\,j=3,\,4,\,\ldots,\,n, \label{lineqnsimple}
\end{equation}
 and the quadratic forms $Q_j(x_i)$ as follows:
\begin{equation}
\begin{aligned}
Q_1(x_i)&=\sum_{i=1}^na_ix_i^2,\quad Q_2(x_i)=\sum_{i=n+1}^{2n}a_ix_i^2,\\
Q_j(x_i)&=0,\; j=3,\,4,\,\ldots,s+2,
\end{aligned}
\label{defqdfms}
\end{equation}
where the coefficients $a_i$ are arbitrary integers such that $a_1 < 0$ and $a_i > 0,\;i=2,\,3,\,\ldots,\,2n$.

On substituting $x_j=0,\,j=3,\,4,\,\ldots,\,n$ in the form $\psi(x_1,\,x_2,\,\ldots,\,x_n)$, we get,
\begin{equation}
\psi(x_1,\,x_2,\,0,\,0,\,\ldots,\,0)=x_1^n+p_1x_1^{n-1}x_2+p_2x_1^{n-2}x_2^2+\cdots+p_nx_2^n.
\end{equation}
We will  take $n$ as even and  choose the integers $p_j$ such that $p_j=0$ for all odd values of  $j$ so that  $\psi(x_1,\,x_2,\,0,\,0,\,\ldots,\,0)=\psi(-x_1,\,x_2,\,0,\,0,\,\ldots,\,0)$. Now \eqref{sysgen1} may be written as 
\begin{multline}
\quad \quad \quad \psi(x_1,\,x_2,\,0,\,0,\,\ldots,\,0)(a_1x_1^2+a_2x_2^2)\\
+\psi(x_{n+1},\,\ldots,\,x_{2n})\left(\sum_{i=n+1}^{2n}a_ix_i^2\right)=0. \quad \quad \quad \quad \quad \label{sysgensimple}
\end{multline}
It follows that  if $(\alpha_1,\,\alpha_2,\,\ldots,\,\alpha_{2n})$  is a solution of our diophantine system, then $(-\alpha_1,\,\alpha_2,\,\ldots,\,\alpha_{2n})$ is also a solution. Thus, we may define  $R$ as the reflection mapping given by 
\begin{equation}
R(\alpha_1,\,\alpha_2,\,\ldots,\,\alpha_{2n})=(-\alpha_1,\,\alpha_2,\,\ldots,\,\alpha_{2n}),
\end{equation}
and now our diophantine system satisfies the property ${\bf D_2^{\prime}} $. 

We have already used the auxiliary equations \eqref{valxjbilinear}  to reduce Eq.~\eqref{sysgen1} to Eq.~\eqref{sysgen1red}. We now solve Eqs.~ \eqref{valxjbilinear} and the linear equations \eqref{lineqnsimple} for $x_3,\,x_4,\,\ldots,\,x_{2n}$ and on substituting the values thus obtained in Eq.~\eqref{sysgen1red}, we get
\begin{equation}
a_1x_1^2+a_2x_2^2+\psi(m_1,\,m_2,\,\ldots,\,m_n)Q_2^{\prime}(x_1,\,x_2)=0, \label{sysgen1red1}
\end{equation}
where we note that $Q_2^{\prime}(x_1,\,x_2)$ is necessarily a positive definite form in the variables $x_1,\,x_2$.   

We have now reduced Eq.~\eqref{sysgen1} to an equation of type \eqref{sysgen1redf}.  We  will  show that Eq.~\eqref{sysgen1red1} cannot be identically 0 for any rational numerical values of $m_1,\,m_2,\,\ldots,\,m_n$ and all values of $x_1$ and $x_2$. If this were to happen when $m_i=\mu_i,\;i=1,\,2,\,\ldots,\,n$, and $\psi(\mu_1,\,\mu_2,\,\ldots,\,\mu_n) > 0$, on taking $(x_1,\,x_2)=(0,\,1)$, the left-hand side of Eq.~\eqref{sysgen1red1} becomes  positive, while if $\psi(\mu_1,\,\mu_2,\,\ldots,\,\mu_n) <  0$, on taking $(x_1,\,x_2)=(1,\,0)$, the left-hand side of Eq.~\eqref{sysgen1red1} becomes negative, and hence, in either case, we have a contradiction. 

We have thus reduced our diophantine system to an equation of type \eqref{sysgen1redf} in which the coefficients of $x_1^2,\, x_1x_2$ and $x_2^2$ do not vanish simultaneously for any rational numerical values of $m_1,\,m_2,\,\ldots,\,m_n$. Thus, our diophantine system  satisfies condition (ii) of property ${\bf D_3^{\prime}} $. 

We now  choose the arbitrary integers $a_i$ such that Eqs.~\eqref{sysgen1} and \eqref{sysgenj} have a solution $(\alpha_1,\,\alpha_2,\,\ldots,\,\alpha_{2n})$ where  $(\alpha_1,\,\alpha_2) \neq (0,\,0)$ and  also  $(\alpha_{n+1},\,$ $ \alpha_{n+2},\,\ldots,\,\alpha_{2n}) \neq (0,\,0,\,\,\ldots,\,0)$.

Now starting from the  known point $P=(\alpha_1,\,\alpha_2,\,\ldots,\,\alpha_{2n})$ on the variety defined by Eqs.~\eqref{sysgen1} and \eqref{sysgenj}, we first find the values of $m_1,\,m_2,\,\ldots,\,m_n$ by solving the equations
\begin{equation}
\alpha_{n+j}= B_j(m_i,\,\alpha_i),\;\; j=1,\,2,\,\ldots,\,n. \label{valxjbilineareqm}
\end{equation} 

We note that for rational numerical values of $x_1,\,x_2,\,\ldots,\,x_n$, not all 0, the linear forms $B_j(m_i,\,x_i),\;\; j=1,\,2,\,\ldots,\,n$, are  linearly independent since $N(\alpha) \neq 0$. Thus, Eqs.~\eqref{valxjbilineareqm} are solvable, and since $(\alpha_{n+1},\,\alpha_{n+2},\,\ldots,\,\alpha_{2n}) \neq (0,\,0,\,\,\ldots,\,0)$, we obtain a solution in which the values of $m_1,\,m_2,\,\ldots,\,m_n$  are not all 0. With these values of $m_i$, Eq.~\eqref{sysgen1red1}  has two solutions corresponding to the point $P$ and its conjugate $C(P)$. We thus get a solution for the $x_1$ and $x_2$ coordinates of the point $C(P)$ such that $(x_1,\,x_2) \neq (0,\,0)$. For all points on the variety, we have $x_j=0,\;j=3,\,4,\,\ldots,\,x_n$, and now we use the relations \eqref{valxjbilinear} to get the remaining coordinates of the point $C(P)$. We now note that for rational numerical values of $m_1,\,m_2,\,\ldots,\,m_n$, not all 0, the linear forms $B_j(m_i,\,x_i),\;\; j=1,\,2,\,\ldots,\,n$, are  linearly independent since $N(\beta) \neq 0$. Since $(x_1,\,x_2) \neq (0,\,0)$ and the values of $m_i$ are also not all 0, it follows that the coordinates $x_{n+1},\,x_{n+2},\,\ldots,\,x_{2n}$ of the point $C(P)$ cannot be simultaneously 0. Now on taking the reflection of $C(P)$, we get the point $RC(P)$ which is such that its  $x_1,\,x_2$ coordinates are not simultaneously 0 and the last $n$ coordinates, $x_{n+1},\,x_{n+2},\,\ldots,\,x_{2n}$, are also not simultaneously 0. 

Thus when we take a point $P$ whose $x_1,\,x_2$ coordinates are not simultaneously 0 and whose last $n$ coordinates  are also not simultaneously 0, the auxiliary equations \eqref{valxjbilinear} can be solved to obtain rational numerical values, not all 0,  of $m_1,\,m_2,\,\ldots,\,m_n$ and on applying the $RC$ operation, we get the point $RC(P)$ whose coordinates satisfy the same conditions. We can therefore repeatedly apply  the $RC$ operation, and the rational points successively generated at each stage satisfy the same conditions as the coordinates of the initial point $P$, and thus the weaker form of condition (i) of property  ${\bf D_3^{\prime}} $ is satisfied.

Thus starting from the point $P$, we may apply the $RC$ operation any number of times to obtain a sequence of rational points,
\[P,\;\;RC(P),\;\;(RC)^2(P),\;\;(RC)^3(P),\ldots,\,\;\;(RC)^k(P),\ldots\]
on the variety defined by Eqs.~\eqref{sysgen1} and \eqref{sysgenj}.

 Each of these points represents a solution not only of Eqs.~\eqref{sysgen1} and \eqref{sysgenj} but also of the single diophantine equation of degree $n+2$ in  $n+2$ variables  obtained by eliminating the $n-2$ variables $x_3,\,x_4,\,\ldots,\,x_n$ from Eqs.~\eqref{sysgen1} and \eqref{sysgenj}. This diophantine equation is given by Eq.~\eqref{sysgensimple}.  

Thus, for any arbitrary even integer $n$ howsoever large, we have  constructed a  projective variety defined by an  equation of degree $n+2$ in $n+2$ variables such that starting from a known point $P$ on the variety, we can generate  an arbitrarily long  sequence of rational points on the variety. It is  expected that, in general, this sequence consists of infinitely  many distinct rational points.
 
As a numerical example, taking $\rho$ as a root of the equation $x^6+2=0$, we get a composition of forms identity of type Eq.~\eqref{compidentgen} in which
\begin{multline}
\psi(x_1,\,x_2,\,\ldots,\,x_6)=x_1^6+(12x_2x_6+12x_3x_5+6x_4^2)x_1^4\\
-(12x_2^2x_5+24x_2x_3x_4+4x_3^3-24x_3x_6^2-48x_4x_5x_6-8x_5^3)x_1^3\\
+(12x_2^3x_4+18x_2^2x_3^2+36x_2^2x_6^2-72x_2x_4x_5^2-72x_3^2x_4x_6\\
+36x_3^2x_5^2+12x_4^4+48x_4x_6^3+72x_5^2x_6^2)x_1^2\\
-(12x_2^4x_3+48x_2^3x_5x_6-72x_2^2x_4^2x_5-48x_2x_3^3x_6\\
+48x_2x_3x_4^3-96x_2x_3x_6^3+96x_2x_5^3x_6+24x_3^4x_5-24x_3^3x_4^2\\
+144x_3x_4^2x_6^2-48x_3x_5^4-96x_4^3x_5x_6+48x_4^2x_5^3-96x_5x_6^4)x_1\\
+2x_2^6+24x_2^4x_4x_6+12x_2^4x_5^2-24x_2^3x_3^2x_6-48x_2^3x_3x_4x_5\\
-8x_2^3x_4^3+16x_2^3x_6^3+24x_2^2x_3^3x_5+36x_2^2x_3^2x_4^2\\
-144x_2^2x_3x_5x_6^2+72x_2^2x_4^2x_6^2+24x_2^2x_5^4-24x_2x_3^4x_4\\
+144x_2x_3^2x_5^2x_6-96x_2x_3x_4x_5^3-48x_2x_4^4x_6+48x_2x_4^3x_5^2\\
+96x_2x_4x_6^4-96x_2x_5^2x_6^3+4x_3^6+24x_3^4x_6^2-96x_3^3x_4x_5x_6\\
-16x_3^3x_5^3+48x_3^2x_4^3x_6+72x_3^2x_4^2x_5^2+48x_3^2x_6^4\\
-48x_3x_4^4x_5-192x_3x_4x_5x_6^3+96x_3x_5^3x_6^2+8x_4^6\\
-32x_4^3x_6^3+144x_4^2x_5^2x_6^2-96x_4x_5^4x_6+16x_5^6+32x_6^6.
\end{multline}

As described above, we construct the following diophantine equation of degree 8 in the 8 variables $x_1,\,x_2,\,x_7,\,x_8,\,\dots,\,x_{12}$:
\begin{multline}
(x_1^6+2x_2^6)(-x_1^2+33x_2^2)\\
=\psi(x_7,\,x_8,\,\ldots,\,x_{12})(x_7^2+x_8^2+x_9^2+x_{10}^2+x_{11}^2+3x_{12}^2). \label{gennumex1}
\end{multline}

Starting from the point $P$ given by $(x_1,\,x_2,\,x_7,\,x_8,\,x_9,\,x_{10},\,x_{11},\,x_{12})=(1,\,1,\,0,\,0,\,0,\,0,\,0,\,1)$, we get an arbitrarily long sequence of rational points on the variety \eqref{gennumex1}. The next two points $RC(P)$ and $(RC)^2P$ of this sequence are given by $(-19,\, 109, \, -60, \, 60, \, -60,\, 60, \, -60, \, 79)$ and 
\[\begin{aligned}
(& -5213798318716593857204393, & 5565884623111221678035287,\\
 & -163345550361845074758840, & 174763856538766110586320, \\
& -109258836892219115576040,&  485050791706620297477120, \\
& 1670808317491786482902760, & 5483216674456163821019527).
\end{aligned}\]
The next point $(RC)^3P$ involves integers consisting of 251 digits and is therefore omitted. There seems to be no doubt that the above sequence of rational points on the variety \eqref{gennumex1} contains infinitely many distinct rational points but this remains to be proved. 

In the next two  sections, we give  examples of  projective varieties whose construction is  a little more involved, and  we prove  that there exists an arbitrarily long sequence of distinct rational points on these varieties.

As a natural extension of our earlier notation,  we will denote the individual coordinates of the points $P,\,C(P),\,R(P)$ and $RC(P)$ by $x_i(P),\,x_i(CP)$,  $x_i(RP)$ and $x_i(RCP),\;i=1,\,2,\,\ldots,\,{2n}$, respectively. 

\section{Examples based on  composition of quartic forms}\label{exquarticcomp}
If the  quartic form $F(x_1,\,x_2,\,x_3,\,x_4)$ in four variables $x_1,\,x_2,\,x_3,\,x_4$ is defined by 
\begin{multline}
F(x_1,\,x_2,\,x_3,\,x_4)=x_1^4-2px_1^3x_3+px_1^2x_2^2-(2p^2-4q)x_1^2x_2x_4\\
+(p^2+2q)x_1^2x_3^2+p(p^2-3q)x_1^2x_4^2-4qx_1x_2^2x_3+4pqx_1x_2x_3x_4\\
-2pqx_1x_3^3-2q(p^2-2q)x_1x_3x_4^2+qx_2^4-2pqx_2^3x_4+pqx_2^2x_3^2\\
+q(p^2+2q)x_2^2x_4^2-4q^2x_2x_3^2x_4-2pq^2x_2x_4^3+q^2x_3^4+pq^2x_3^2x_4^2+q^3x_4^4, \label{quartfm}
\end{multline}
we have the identity
\begin{equation}
F(x_1,\,x_2,\,x_3,\,x_4)F(m_1,\,m_2,\,m_3,\,m_4)=F(x_5,\,x_6,\,x_7,\,x_8), \label{quarticcomp}
\end{equation}
where the values of $x_5,\,x_6,\,x_7,\,x_8$ are given by the relations,
\begin{equation}
\begin{aligned}
x_5 &= m_1x_1-qm_4x_2-qm_3x_3-q(m_2-pm_4)x_4,\\
 x_6 &= m_2x_1+m_1x_2-qm_4x_3-qm_3x_4, \\
x_7 &= m_3x_1+(m_2-pm_4)x_2+(m_1-pm_3)x_3\\
&\quad \;\;-\{pm_2-(p^2-q)m_4\}x_4,\\
 x_8 &= m_4x_1+m_3x_2+(m_2-pm_4)x_3+(m_1-pm_3)x_4,
 \label{valx5678quartic}
\end{aligned}
\end{equation}
with $p,\,q$ being arbitrary rational parameters. 

The identity \eqref{quarticcomp} may be obtained, as indicated in Section 6.2, by using norms of algebraic integers belonging to the field $\mathbb{Q}(\rho)$ where $\rho$ is a root of the equation,
\begin{equation}
x^4+px^2+q=0, \label{quarticeqrho}
\end{equation}
where $p,\,q$ are arbitrary rational integers such that Eq.~\eqref{quarticeqrho} is irreducible. 

As mentioned in Section 6.2, we solve the four equations \eqref{valx5678quartic} for $m_1,\,m_2,$ $m_3,\,m_4$ and thus obtain the following identities:
\begin{equation}
\begin{aligned}
G_1(x_i)&=m_1F(x_1,\,x_2,\,x_3,\,x_4),\quad G_2(x_i)&=m_2F(x_1,\,x_2,\,x_3,\,x_4),\\
G_3(x_i)&=m_3F(x_1,\,x_2,\,x_3,\,x_4),\quad G_4(x_i)&=m_4F(x_1,\,x_2,\,x_3,\,x_4),
\end{aligned}
\label{defphiquarticfm}
\end{equation}
where $G_j(x_i),\,j=1,\,2,\,3,\,4$, are quartic forms in the 8 variables $x_1,\,x_2,\,\ldots,\,$ $ x_8$, given by
\begin{equation}
\begin{aligned}
G_1(x_i)&=x_1^3x_5+(qx_2x_8-2px_3x_5+qx_3x_7+qx_4x_6-pqx_4x_8)x_1^2\\& \quad
+(px_2^2x_5-qx_2^2x_7-2qx_2x_3x_6-2p^2x_2x_4x_5+2qx_2x_4x_5\\&\quad
+2pqx_2x_4x_7+p^2x_3^2x_5+qx_3^2x_5-pqx_3^2x_7+2q^2x_3x_4x_8\\&\quad
+p^3x_4^2x_5-2pqx_4^2x_5-p^2qx_4^2x_7+q^2x_4^2x_7)x_1\\
&\quad+q(x_2^3x_6-x_2^2x_3x_5-2px_2^2x_4x_6+qx_2^2x_4x_8+px_2x_3^2x_6\\
&\quad-qx_2x_3^2x_8+2px_2x_3x_4x_5-2qx_2x_3x_4x_7+p^2x_2x_4^2x_6\\
&\quad+qx_2x_4^2x_6-pqx_2x_4^2x_8-px_3^3x_5+qx_3^3x_7-qx_3^2x_4x_6\\
&\quad-p^2x_3x_4^2x_5+qx_3x_4^2x_5+pqx_3x_4^2x_7-pqx_4^3x_6+q^2x_4^3x_8),
\end{aligned}
\end{equation}
\begin{equation}
\begin{aligned}
G_2(x_i)&=x_1^3x_6-(x_2x_5+2px_3x_6-qx_3x_8-qx_4x_7)x_1^2\\
& \quad +(px_2^2x_6-qx_2^2x_8+2px_2x_3x_5-2qx_2x_3x_7-2p^2x_2x_4x_6\\
& \quad +2qx_2x_4x_6+2pqx_2x_4x_8+p^2x_3^2x_6+qx_3^2x_6-pqx_3^2x_8\\
& \quad -2qx_3x_4x_5+p^3x_4^2x_6-2pqx_4^2x_6-p^2qx_4^2x_8+q^2x_4^2x_8)x_1\\
& \quad -px_2^3x_5+qx_2^3x_7-qx_2^2x_3x_6+2p^2x_2^2x_4x_5-qx_2^2x_4x_5\\
& \quad -2pqx_2^2x_4x_7-p^2x_2x_3^2x_5+qx_2x_3^2x_5+pqx_2x_3^2x_7\\
& \quad +2pqx_2x_3x_4x_6-2q^2x_2x_3x_4x_8-p^3x_2x_4^2x_5+q^2x_2x_4^2x_7\\
& \quad +p^2qx_2x_4^2x_7-pqx_3^3x_6+q^2x_3^3x_8+pqx_3^2x_4x_5\\
& \quad -q^2x_3^2x_4x_7+q^2x_3x_4^2x_6-p^2qx_3x_4^2x_6+pq^2x_3x_4^2x_8\\
& \quad +p^2qx_4^3x_5-q^2x_4^3x_5-pq^2x_4^3x_7,
\end{aligned}
\end{equation}
\begin{equation}
\begin{aligned}
G_3(x_i)&=x_1^3x_7-(x_2x_6-px_2x_8+x_3x_5+px_3x_7-px_4x_6\\&
\quad +p^2x_4x_8-qx_4x_8)x_1^2+(x_2^2x_5-2qx_2x_3x_8-2px_2x_4x_5\\&
\quad +2qx_2x_4x_7+px_3^2x_5+qx_3^2x_7-2qx_3x_4x_6+2pqx_3x_4x_8\\&
\quad +p^2x_4^2x_5-qx_4^2x_5-pqx_4^2x_7)x_1-q(-x_2^3x_8+x_2^2x_3x_7\\&
\quad +x_2^2x_4x_6+px_2^2x_4x_8-x_2x_3^2x_6-2x_2x_3x_4x_5-px_2x_4^2x_6\\&
\quad -qx_2x_4^2x_8+x_3^3x_5+qx_3^2x_4x_8+px_3x_4^2x_5-qx_3x_4^2x_7+qx_4^3x_6),\\
\end{aligned}
\end{equation}
\begin{equation}
\begin{aligned}
G_4(x_i)&=x_1^3x_8-(x_2x_7+x_3x_6+px_3x_8+x_4x_5-px_4x_7)x_1^2\\ & \quad 
+(x_2^2x_6+2x_2x_3x_5-2px_2x_4x_6+2qx_2x_4x_8+px_3^2x_6\\ & \quad 
+qx_3^2x_8-2qx_3x_4x_7+p^2x_4^2x_6-qx_4^2x_6-pqx_4^2x_8)x_1\\ & \quad 
-p^2x_2x_4^2x_5+pqx_2x_4^2x_7-pqx_3x_4^2x_6+pqx_4^3x_5 
+q^2x_3x_4^2x_8 \\ & \quad -q^2x_4^3x_7+2px_2^2x_4x_5-px_2x_3^2x_5
-qx_2^2x_3x_8-qx_2^2x_4x_7\\ & \quad +qx_2x_3^2x_7+2qx_2x_3x_4x_6
-qx_2x_4^2x_5-qx_3^3x_6+qx_3^2x_4x_5-x_2^3x_5.
\end{aligned}
\end{equation}

Using the forms $G_j(x_i),\;j=1,\,2,\,3,\,4$, we construct a form $H(x_i)$ defined as follows:
\begin{equation}
\begin{aligned}
H(x_i)&=(-2x_5x_6+3x_6^2+2px_6x_7-3px_6x_8-2qx_7x_8+3qx_8^2)G_1(x_i)\\
& \quad +(2x_5^2-3x_5x_6-2px_5x_7+3px_5x_8+2qx_7^2-3qx_7x_8)G_2(x_i)\\
 & \quad +2q(x_5x_8-x_6x_7)G_3(x_i)-3q(x_5x_8-x_6x_7)G_4(x_i).
\end{aligned}
\label{defGquarticfm}
\end{equation}

We now write $p=h+3,\,q=h+4$ where $h$ is an arbitrary integer, and consider the diophantine system given by the following three simultaneous equations:
\begin{multline}
\quad \quad \quad \quad F^2(x_1,\,x_2,\,x_3,\,x_4)(-x_1^2+3x_2^2+31x_4^2)\\
+h\{F(x_1,\,x_2,\,x_3,\,x_4)+F(x_5,\,x_6,\,x_7,\,x_8)\}H(x_i)\\
-F^2(x_5,\,x_6,\,x_7,\,x_8)\{x_5-2x_7+2x_8+h(x_6-x_8)\}^2=0, \label{sys1quartic}
\end{multline}
\begin{align}
x_3&=0, \label{sys2quartic}\\
x_4&=x_2.
\label{sys3quartic}
\end{align}
We note that Eq.~\eqref{sys1quartic} is of degree 10 in 8 variables.

It is readily verified that 
\begin{align}
F(x_1,\,x_2,\,0,\,x_4)&=F(-x_1,\,x_2,\,0,\,x_4), \\
H(x_1,\,x_2,\,0,\,x_4,\,\dots,\,x_8)&=H(-x_1,\,x_2,\,0,\,x_4,\,\ldots,\,x_8).
\end{align}
 It now follows  that if 
any  point $P$ represented by the coordinates $(x_1,\,x_2,\,0,\,$ $ x_4,\, x_5,\,x_6,\,x_7,\,x_8)$ lies on the variety defined by Eqs.~\eqref{sys1quartic}, \eqref{sys2quartic} and \eqref{sys3quartic}, then the reflection of $P$ given by $R(P)=(-x_1,\,x_2,\,0,\,x_4,\,x_5,\,x_6,\,x_7,\,x_8)$ also lies on that variety. Thus,  our diophantine system satisfies the property ${\bf D_2^{\prime}} $.

We will choose the four equations given by \eqref{valx5678quartic} as our auxiliary equations. In view of the general proof given in Section 6.2 in the context of the auxiliary equations \eqref{valxjbilinear}, it follows that with our choice of  auxiliary equations, if we take our initial point $P$ such that the $x_1,\,x_2$ coordinates of $P$ are simultaneously not 0 and the last four coordinates of $P$ are also not simultaneously 0, when we repeatedly apply the $RC$ operation, we will successively generate rational points whose coordinates satisfy the same conditions as the coordinates of the point $P$, and hence  the weaker form of condition (i) of property $ {\bf D_3^{\prime}}$ is satisfied. 

 Next, we note that in view of the relations \eqref{quarticcomp}, \eqref{defphiquarticfm} and \eqref{defGquarticfm}, when we substitute the values of $x_5,\,x_6,\,x_7,\,x_8$ given by \eqref{valx5678quartic} in Eq.~\eqref{sys1quartic}, then $F^2(x_1,\,x_2,\,x_3,\,x_4)$ can be factored out, and on further substituting the values of $x_3,\,x_4$ given by Eqs.~\eqref{sys2quartic} and \eqref{sys3quartic}, Eq.~\eqref{sys1quartic} finally reduces to a quadratic equation of type \eqref{sysgen1redf} in which the coefficient of $x_1^2$ is as follows:
\[ 1 + F^2(m_1,\,m_2,\,m_3,\,m_4)\{m_1-2m_3+2m_4+h(m_2-m_4)\}^2\]
We omit the coefficients of $x_1x_2$ and $x_2^2$ as they are cumbersome to write. Since the coefficient of $x_1^2$ is necessarily positive for all rational values of $m_i$ and $h$, it follows that our diophantine system  completely  satisfies  condition (ii) of property $ {\bf D_3^{\prime}}$.

It is  readily verified that for any arbitrary value of $h$, a solution of Eqs.~\eqref{sys1quartic}, \eqref{sys2quartic} and \eqref{sys3quartic} is given by $(3,\,1,\,0,\,1,\,3,\,1,\,0,\,1)$. Thus our diophantine system also satisfies the property ${\bf D_1^{\prime}} $.

We will now show that for infinitely many integer values of $h$, the diophantine system given by   equations \eqref{sys1quartic}, \eqref{sys2quartic} and \eqref{sys3quartic} has  an arbitrarily large number of integer solutions.

We first consider the special case of the projective variety defined by Eqs.~\eqref{sys1quartic}, \eqref{sys2quartic} and \eqref{sys3quartic} when $h=0$.  Now Eq.~\eqref{sys1quartic} reduces on making the substitutions \eqref{valx5678quartic}, removing the factor $F^2(x_1,\,x_2,\,x_3,\,x_4)$ and then substituting the values of $x_3,\,x_4$ given by \eqref{sys2quartic} and \eqref{sys3quartic}, to the following quadratic equation:
\begin{equation}
\begin{aligned}
-x_1^2+34x_2^2 =\phi^2(m_1,\,m_2,\,m_3,\,m_4)(x_1+2x_2)^2, \label{sys3red}
\end{aligned}
\end{equation}
where
\begin{multline}
\phi(m_1,\,m_2,\,m_3,\,m_4)= (m_1^2+m_1m_2-3m_1m_3-5m_1m_4+2m_2^2+2m_2m_3\\
-6m_2m_4+4m_3^2+4m_3m_4+8m_4^2)(m_1^2-m_1m_2-3m_1m_3+5m_1m_4\\
+2m_2^2-2m_2m_3-6m_2m_4+4m_3^2-4m_3m_4+8m_4^2)(m_1-2m_3+2m_4). \label{sys3redphi}
\end{multline}

Now Eq.~\eqref{sys3red} may be written as,
\begin{equation}
\{\phi^2(m_i)+1\}x_1^2+4\phi^2(m_i)x_1x_2+4\{\phi^2(m_i)-34\}x_2^2=0. \label{sys3red1}
\end{equation}
We note that for rational numerical values of $m_1,\,m_2,\,m_3,\,m_4$, the coefficients of $x_1^2$ and $x_2^2$ cannot vanish.  

When we start with the rational point $P_0=(3,\,1,\,0,\,1,\,3,\,1,\,0,\,1)$ and repeatedly apply the $RC$ operation, at each successive step we will get values of $m_1,\,m_2,\,m_3,\,m_4$ such that Eq.~\eqref{sys3red} has rational solutions for $x_1$ and $x_2$. As there are no rational values of $x_1,\,x_2$ for which the left-hand side of Eq.~\eqref{sys3red} can become 0, it follows that  the values of $m_1,\,m_2,\,m_3,\,m_4$ that we get at each successive step are such that $\phi(m_1,\,m_2,\,m_3,\,m_4)\neq 0$.

If $P$ is any rational point on the projective variety under consideration, it follows from Eq.~\eqref{sys3red1} that 
\begin{equation}
\frac{x_1(P)}{x_2(P)} +\frac{x_1(CP)}{x_2(CP)}=-\frac{4\phi^2(m_i)}{\phi^2(m_i)+1}.
\end{equation}
We thus get,
\begin{equation}
\frac{x_1(CP)}{x_2(CP)}=-\frac{x_1(P)}{x_2(P)} -\frac{4\phi^2(m_i)}{\phi^2(m_i)+1}.
\end{equation}
and, on taking the reflection of the conjugate point $C(P)$, we find that,
\begin{equation}
\frac{x_1(RCP)}{x_2(RCP)}=\frac{x_1(P)}{x_2(P)}+\frac{4\phi^2(m_i)}{\phi^2(m_i)+1}. 
\end{equation}
 Since $\phi(m_i) \neq 0$, it follows that 
\begin{equation}
\frac{x_1(RCP)}{x_2(RCP)} > \frac{x_1(P)}{x_2(P)}. 
\end{equation}

Thus, starting from the point $P_0$ and repeatedly applying the $RC$ operation, we get a sequence of rational points such that the ratios $x_1(P_j)/x_2(P_j)$ pertaining to  the successive points $P_j,\;j=0,\,1,\,2,\,\ldots$, form a strictly  increasing monotonic  sequence.  We thus get an infinite sequence of rational points in the special case when $h=0$.

The first four points of the sequence of rational points obtained  in the special case $h=0$ are 
$(3,\,  1,\,  0,\, 1,\, 3,\, 1,\,0,\,  1),\;$ $ (5,\,  1,\,  0,\,  1,\, -5,\,  1,\, 0,\,  1),$
\[(489,\,  87,\,  0,\,  87,\,  841,\, -353,\,0,  -353)\]
 and 
\[(228105,\, 39465,\,  0,\, 39465,\,  -769129, 369377,\, 0,\, 369377).\]

We now revert to the projective  variety defined by Eqs.~\eqref{sys1quartic}, \eqref{sys2quartic} and \eqref{sys3quartic} when $h$ is an arbitrary integer.  Starting from the rational point $P_0$, we may  apply the $RC$ operation $n$ times where $n$ is any arbitrary positive integer howsoever large, and thus obtain a sequence of rational points $P_0,\,P_1,\,P_2,\,\ldots,P_n$ on the variety such that  the coordinates of these points are given by rational functions of $h$. Now,  following the same argument as was used in Section 3.2, it is established that there are infinitely many integer values of $h$ such that there are an arbitrarily large number of  distinct rational points  on the variety defined by the simultaneous equations \eqref{sys1quartic}, \eqref{sys2quartic} and \eqref{sys3quartic}. 

We can readily eliminate the variables $x_3$ and $x_4$ from Eqs.~\eqref{sys1quartic}, \eqref{sys2quartic} and \eqref{sys3quartic}. The resulting equation, which is of degree 10 in 6 variables $x_1,\,x_2,\,x_5,\,x_6,\,x_7,\,x_8$, has an arbitrarily large number of integer solutions for infinitely many integer values of $h$.  As this tenth degree equation is too cumbersome to write, we do not give it explicitly.

	For any arbitrary value of $h$, we note that the ratio 
	\begin{equation}
\frac{ \displaystyle F(x_5,\,x_6,\,x_7,\,x_8)}{ \displaystyle F(x_1,\,x_2,\,x_3,\,x_4)}	=F(m_1,\,m_2,\,m_3,\,m_4), \label{relratquartfms}
\end{equation}
is the same for any point $P$ and its conjugate $C(P)$ (since the values of $m_1,\,m_2,\,m_3,\,$ $ m_4$ are the same for both $P$ and $C(P)$). Further, since $x_3=0$ for all solutions of the simultaneous equations \eqref{sys1quartic}, \eqref{sys2quartic} and \eqref{sys3quartic}, and $F(x_1,\,x_2,\,0,\,x_4)=F(-x_1,\,x_2,\,0,\,x_4)$, the above ratio is also the same for any rational point $P$ on the variety defined by Eqs.~\eqref{sys1quartic}, \eqref{sys2quartic} and \eqref{sys3quartic}  and its reflection $R(P)$. 

It follows that the ratio $F(x_5,\,x_6,\,x_7,\,x_8)/F(x_1,\,x_2,\,x_3,\,x_4)$ is the same for all the points of the infinite sequence of rational points $P_0,\,P_1,\,P_2,\,\ldots$  . For the point $P_0$, this ratio is 1, and hence all the points of the aforementioned infinite sequence  also satisfy the equation,
\begin{equation}
  F(x_1,\,x_2,\,x_3,\,x_4)=F(x_5,\,x_6,\,x_7,\,x_8). \label{sys4quartic}
\end{equation}
Using this relation, we can reduce Eq.~\eqref{sys1quartic} to the following sextic equation:
\begin{multline}
F(x_1,\,x_2,\,x_3,\,x_4)\{-x_1^2+3x_2^2+31x_4^2\\
-(x_5-2x_7+2x_8+h(x_6-x_8))^2\}+2hH(x_i)=0, \label{sys5quartic}
\end{multline}

It follows that  there are infinitely many values of $h$ for which the diophantine system consisting of the four simultaneous equations \eqref{sys2quartic}, \eqref{sys3quartic}, \eqref{sys4quartic} and \eqref{sys5quartic} has an arbitrarily large number of integer solutions. Using the values of $x_3$ and $x_4$ given by \eqref{sys2quartic} and  \eqref{sys3quartic}, we can readily eliminate these two variables and reduce the aforementioned diophantine system to just a pair of simultaneous equations, one of degree 4 and one of degree six, in the six variables  $x_1,\,x_2,\,x_5,\,x_6,\,x_7,\,x_8$.  As the sextic equation is very cumbersome to write, we do not give these two equations explicitly. When $h \neq 0$, if we eliminate $x_1$ from these two equations, 
we get an equation of degree 8 in 5 variables and the possibility of finding an elliptic curve on the projective variety defined by these equations for any arbitrary integer value of $h$ seems rather remote.

It, therefore, appears that there exist  nonzero integer values of $h$ for which the projective varieties defined by  Eqs.~\eqref{sys1quartic}, \eqref{sys2quartic}, \eqref{sys3quartic}, as well as by Eqs.~\eqref{sys2quartic}, \eqref{sys3quartic}, \eqref{sys4quartic} and \eqref{sys5quartic} have  an arbitrarily large number of  rational points but it is unlikely that there is a curve of genus 0 or 1 on either of the two varieties. 

When $h=1$, the first four solutions of the simultaneous equations \eqref{sys2quartic}, \eqref{sys3quartic}, \eqref{sys4quartic} and \eqref{sys5quartic}, obtained as described above, are as follows:
\[(3,\,  1,\,  0,\, 1,\, 3,\, 1,\,0,\,  1), \quad (7,\,  1,\,  0,\, 1,\, -7,\, 1,\,0,\,  1),\]
\[( 2734239, \, 3306073, \, 0, \, 3306073, \,13666439, \, -2392627, \, 4558960, \, -3695187),\]
and
\[
\begin{aligned}
(&-3163872323529222681410246909549439930487861, \\
 &294781492568707070721896411005966791102577, \\
&0, \,294781492568707070721896411005966791102577, \\
&-2921410980830404790669801314286995320934339, \\
 &5201133567144847068196406071233329607771277, \\
 &896748650376707929120182583747883623109040,\\
 & 2508328560483795657430399046587193165809837).
\end{aligned}
\] 

\section{Examples based on composition of cubic forms}\label{excubiccomp} 
If the  cubic form $C(x_1,\,x_2,\,x_3)$ in three variables $x_1,\,x_2,\,x_3$ is defined by 
\begin{multline}
C(x_1,\,x_2,\,x_3)=x_1^3-px_1^2x_2+(p^2-2q)x_1^2x_3+qx_1x_2^2-(pq-3r)x_1x_2x_3\\
-(2pr-q^2)x_1x_3^2-rx_2^3+prx_2^2x_3-qrx_2x_3^2+r^2x_3^3, \label{cubicfm}
\end{multline}
we have the identity,
\begin{equation}
C(x_1,\,x_2,\,x_3)C(m_1,\,m_2,\,m_3)=C(x_4,\,x_5,\,x_6), \label{cubiccomp}
\end{equation}
where the values of $x_4,\,x_5,\,x_6$ are given by the relations,
\begin{equation}
\begin{aligned}
x_4 &= m_1x_1-rm_3x_2-(rm_2-prm_3)x_3,\\
 x_5& = m_2x_1+(m_1-qm_3)x_2-(qm_2-(pq-r)m_3)x_3,\\
 x_6& = m_3x_1+(m_2-pm_3)x_2+(m_1+pm_2-(p^2-q)m_3)x_3\, \label{valx456cubic}
\end{aligned}
\end{equation}
with $p,\,q,\,r$ being arbitrary rational parameters.

The identity \eqref{cubiccomp} may be obtained, as indicated in Section~6.2, by using norms of algebraic integers belonging to the field $\mathbb{Q}(\rho)$ where $\rho$ is a root of the  equation,
\begin{equation}
x^3+px^2+qx+r=0, \label{cubiceqrho} 
\end{equation}
where $p,\,q,\,r$ are arbitrary rational integers such that Eq.~\eqref{cubiceqrho} is irreducible.

We can now solve Eqs.~\eqref{valx456cubic} for $m_1,\,m_2,\,m_3$ to obtain three identities,
\begin{equation}
C_j(x_1,\,x_2,\,\ldots,\,x_6)=m_jC(x_1,\,x_2,\,x_3),\;\; j=1,\,2,\,3, \label{cubicfmident}
\end{equation}
where $C_j(x_i),\;j=1,\,2,\,3,$ are cubic forms in the six variables $x_1,\,x_2,\,\ldots,\,x_6$ and the identities \eqref{cubicfmident} are naturally satisfied when the values of $x_4,\,x_5,\,x_6$ are given by the relations \eqref{valx456cubic}. We can then construct the quintic form,
\begin{multline}
f(x_1,\,x_2,\,\ldots,\,x_6)=C(x_1,\,x_2,\,x_3)Q_1(x_i)+C(x_4,\,x_5,\,x_6)Q_2(x_i)\\
+\sum_{j=1}^3C_j(x_i)Q_{j+2}(x_i).
\end{multline}
where $Q_j(x_i),\;j=1,\,2,\,\ldots,\,5$, are arbitrary quadratic forms in the six variables $x_1,\,x_2,\,\ldots,\,x_6.$ 

On substituting the values of $x_4,\,x_5,\,x_6$ in the form $f(x_i)$, we get $C(x_1,\,x_2,\,$ $ x_3)$ as a factor, and the equation $f(x_i)=0$ reduces, on removing the factor $C(x_1,\,x_2,\,x_3)$, to a quadratic equation. We can thus  construct a pair of simultaneous equations including the quintic equation $f(x_i)=0$ such that these equations satisfy the property  ${\bf D_3^{\prime}}$. If we now impose the condition that  these simultaneous equations also satisfy  the property  ${\bf D_2^{\prime}}$ with our usual definition of reflection, that is, $R(x_1,\,x_2,\,\ldots,\,x_6)=R(-x_1,\,x_2,\,\ldots,\,x_6)$,   then the variable $x_1$ can only appear in even degrees in our quintic equation, and hence  the  term with  $x_1^5$ cannot occur in the equation. While we can solve such equations when all the three properties ${\bf D_1^{\prime}}$, ${\bf D_2^{\prime}}$ and ${\bf D_3^{\prime}}$ are satisfied, we wish to construct a pair of solvable simultaneous equations which includes a  quintic equation in which all the terms $x_i^5,\;i=1,\,\ldots,\,6$, are present, and accordingly we need to modify our definition of reflection. 

We  now prove a lemma that leads to  a new  definition of reflection.

\begin{lemma}\label{defreflection}  If $a$ and $b$ are arbitrary rational numbers such that $a \neq \pm b$, the quadratic form $Q(x_1,\,x_2)$ and the two cubic forms $C_4(x_1,\,x_2)$ and $C_5(x_1,\,x_2)$ defined by
\begin{equation}
\begin{aligned}
Q(x_1,\,x_2)&=(a-b)^2x_1^2+2(a-b)^2x_1x_2+4(a^2+ab+b^2)x_2^2,\\
C_4(x_1,\,x_2)&= x_1^3-\frac{12(a^2+ab+b^2)}{(a-b)^2}x_1x_2^2-\frac{8(a^2+ab+b^2)}{(a-b)^2}x_2^3 \\
C_5(x_1,\,x_2)&=x_1^2x_2+2x_1x_2^2-\frac{4ab}{(a-b)^2} x_2^3,
\label{deffms}
\end{aligned}
\end{equation}
remain unchanged when $x_1$ is replaced by 
\[-ax_1/(a+b)-\{2(a^2+ab+b^2)\}x_2/(a^2-b^2)\] 
and $x_2$ is replaced by \[(a-b)x_1/\{2(a+b)\}-bx_2/(a+b).\]
\end{lemma}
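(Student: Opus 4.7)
The plan is to interpret the replacement as the linear substitution $(x_1,x_2)^\top \mapsto M(x_1,x_2)^\top$, where
$$M = \begin{pmatrix} -\dfrac{a}{a+b} & -\dfrac{2(a^2+ab+b^2)}{a^2-b^2} \\[1ex] \dfrac{a-b}{2(a+b)} & -\dfrac{b}{a+b} \end{pmatrix}.$$
A direct computation (using $a^2-b^2=(a+b)(a-b)$) gives $\operatorname{tr}(M)=-1$ and $\det(M)=1$, so the characteristic polynomial of $M$ is $\lambda^2+\lambda+1$. By Cayley--Hamilton, $M^2+M+I=0$, hence $M^3=I$. Thus, for $a \neq \pm b$, $M$ generates a cyclic group of order $3$ acting on $\mathbb{Q}(a,b)[x_1,x_2]$, and the lemma amounts to the assertion that $Q$, $C_4$, $C_5$ lie in the invariant ring of this action.

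To carry out the verification, I would substitute the new values of $x_1$ and $x_2$ directly into each of $Q(x_1,x_2)$, $C_4(x_1,x_2)$, $C_5(x_1,x_2)$, clear the denominators (which are powers of $(a+b)$ and $(a-b)$), and compare coefficients of $x_1^i x_2^j$ with those of the original forms. This reduces to $3+4+4=11$ scalar polynomial identities in $a$ and $b$, each verified by routine expansion. Since the transformation has $\det M = 1$, degrees of monomials are preserved and no cross-degree matching is needed.

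The main obstacle is purely the volume of the algebraic bookkeeping, not any conceptual difficulty; the computation is largely self-checking because $M^3=I$ forces the substitution applied three times in succession to return to the identity, providing a strong consistency test at each stage. A cleaner conceptual shortcut would be to diagonalise $M$ over $\mathbb{Q}(a,b)(\omega)$, with $\omega$ a primitive cube root of unity: the invariant ring of the $\langle M \rangle$-action is generated by $u_+ u_-$ in degree $2$ and by $u_+^3$ and $u_-^3$ in degree $3$, where $u_\pm$ are eigenvectors of $M$ associated with eigenvalues $\omega^{\pm 1}$. Expressing $Q$, $C_4$, $C_5$ in the basis $\{u_+,u_-\}$ and verifying that they are Galois-stable combinations of these basic invariants confirms the lemma without executing the full expansion.
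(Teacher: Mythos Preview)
Your proposal is correct. The paper's own proof consists of the single sentence ``The lemma is readily verified by direct computation,'' so your primary approach---substitute and compare coefficients---is exactly what the paper does.

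Where you go further is in the structural observation that the substitution matrix $M$ has $\operatorname{tr}M=-1$ and $\det M=1$, hence $M^3=I$ by Cayley--Hamilton. This is not in the paper but is genuinely illuminating: it explains \emph{why} one should expect a three-dimensional space of cubic invariants and a one-dimensional space of quadratic invariants (up to the trivial invariant $\det$), and it supplies the self-check you mention. Your alternative route via diagonalisation over $\mathbb{Q}(a,b)(\omega)$ and the description of the invariant ring as generated by $u_+u_-$, $u_+^3$, $u_-^3$ is a legitimate and cleaner way to verify the lemma without brute expansion, though one must still check that the particular rational combinations $Q$, $C_4$, $C_5$ match (this is a small linear-algebra step rather than a polynomial expansion). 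The paper offers none of this context; your version is strictly more informative.
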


\begin{proof} The lemma is readily verified by direct computation.
\end{proof}

We now consider the simultaneous diophantine equations,
\begin{align}
k_1C(x_1,\,x_2,\,x_3)Q(x_1,\,x_2)&=k_2C(x_4,\,x_5,\,x_6)Q_1(x_4,\,x_5,\,x_6), \label{eqdeg5} \\
x_3&=0, \label{lineqncubicex}
\end{align}
where $k_1,\,k_2$ are arbitrary nonzero integers, $Q_1(x_4,\,x_5,\,x_6)$ is an arbitrary quadratic form in the variables  $x_4,\,x_5,\,x_6$, 
and the form $C(x_1,\,x_2,\,x_3)$ is defined by \eqref{cubicfm} where 
we take the values of the parameters $p, \, q, \,r$ in the cubic form $C(x_1,\,x_2,\,x_3)$ as follows:
\begin{equation}
\begin{aligned}
p&=h,\\
q&=-2\{(h+6)a^2-(2h-6)ab+(h+6)b^2\}/(a-b)^2,\\
r&=4\{2a^2-(h-2)ab+2b^2\}/(a-b)^2,
\end{aligned} \label{valpqrcubicex}
\end{equation}
with $a,\,b$ and $h$ being arbitrary rational parameters such that $a \neq \pm b$.

With these values of $p, \, q,\, r$, we get 
\[C(x_1,\,x_2,\,0)=C_4(x_1,\,x_2)-hC_5(x_1,\,x_2),\]
 and so, on using the linear equation \eqref{lineqncubicex}, we may write Eq.~\eqref{eqdeg5} as follows:
\begin{multline}
k_1\{C_4(x_1,\,x_2)-hC_5(x_1,\,x_2)\}Q(x_1,\,x_2)\\
=k_2C(x_4,\,x_5,\,x_6)Q_1(x_4,\,x_5,\,x_6).\quad \quad \quad \quad  \label{eqdeg5a}
\end{multline}

If  an arbitrary rational point $P$ on the variety defined by Eqs.~\eqref{eqdeg5} and \eqref{lineqncubicex} is given by $(x_1,\,x_2,\,0,\,x_4,\,x_5,\,x_6)$, instead of our usual definition of reflection, we now define the reflection of $P$ as follows:
\begin{equation}
R(P)=(-\frac{\displaystyle ax_1}{\displaystyle a+b}-\frac{\displaystyle 2(a^2+ab+b^2)x_2}{\displaystyle a^2-b^2},\,\frac{\displaystyle (a-b)x_1}{\displaystyle 2(a+b)}-\frac{\displaystyle  bx_2}{\displaystyle a+b},\,0,\,x_4,\,x_5,\,x_6). \label{defRP}
\end{equation} 
It follows from Lemma 2 and Eq.~\eqref{eqdeg5a} that the point $R(P)$ lies on the variety defined by  Eqs.~\eqref{eqdeg5} and \eqref{lineqncubicex}. Thus, the diophantine system given by Eqs.~\eqref{eqdeg5} and \eqref{lineqncubicex} satisfies the property ${\bf D_2^{\prime}}$.

We will choose the three equations given by Eq.~\eqref{valx456cubic} as our auxiliary equations. Here again, in view of the general proof given in Section 6.2 in the context of the auxiliary equations \eqref{valxjbilinear}, it follows that with our choice of  auxiliary equations, if we take our initial point $P$ such that the $x_1,\,x_2$ coordinates of $P$ are simultaneously not 0 and the last three coordinates of $P$ are also not simultaneously 0, when we repeatedly apply the $RC$ operation, we will successively generate rational points whose coordinates satisfy the same conditions as the coordinates of the point $P$, and hence  the weaker form of condition (i) of property $ {\bf D_3^{\prime}}$ is satisfied. 

In view of the identity \eqref{cubiccomp}, on substituting the values of $x_4,\,x_5,\,x_6$ given by \eqref{valx456cubic} in Eq.~\eqref{eqdeg5}, we can factor out $C(x_1,\,x_2,\,x_3)$ and thus reduce Eq.~\eqref{eqdeg5} to a quadratic equation in $x_1$ and $x_2$. We can readily choose the parameters $a,\,b,\,h,\,k_1,\,k_2$ and the quadratic form $Q_1(x_4,\,x_5,\,x_6)$ such that property ${\bf D_1^{\prime}}$ is also satisfied. 

As a specific example, we take $a=1,\, b=-2,\,k_1 = 8,\, k_2 = 39$, when we get the values of $p,\,q,\,r$ from Eq.~\eqref{valpqrcubicex} as
\begin{equation}
p = h,\;\; q = -2h-4,\;\;r = 8h/9+8/3, \label{valpqrspl}
\end{equation}
and we take the quadratic form $Q_1(x_4,\,x_5,\,x_6)$ as given by
\begin{equation}
Q_1(x_4,\,x_5,\,x_6)=(x_4+x_5+x_6)^2+(h+3)(49x_4^2-36x_5^2+x_6^2),
\end{equation}
 and now on using the relation \eqref{lineqncubicex}, Eq.~\eqref{eqdeg5} reduces to the following equation:
\begin{multline}
72(9x_1^3-9hx_1^2x_2-18(h+2)x_1x_2^2-8(h+3)x_2^3)(3x_1^2+6x_1x_2+4x_2^2)\\
 = 13\{ 81x_4^3-81hx_4^2x_5+81(h^2+4h+8)x_4^2x_6-162(h+2)x_4x_5^2\\
+(162h^2+540h+648)x_4x_5x_6+(180h^2+864h+1296)x_4x_6^2\\
-72(h+3)x_5^3+72h(h+3)x_5^2x_6+144(h+3)(h+2)x_5x_6^2\\
+64(h+3)^2x_6^3 \}\{(x_4+x_5+x_6)^2+(h+3)(49x_4^2-36x_5^2+x_6^2)\}.\label{eqdeg5genh}
\end{multline}

It is readily verified that, for all values of $h$, the point $P_0$ given by $(1,\,1,\,0,\,$ $6,\,-7,\,0)$ lies on the variety defined by Eqs.~\eqref{lineqncubicex} and \eqref{eqdeg5genh}. 
 
 On substituting the values of $x_4,\,x_5,\,x_6$ given by \eqref{valx456cubic} where we take $x_3=0$ and the values of $p,\,q,\,r$ as given by \eqref{valpqrspl}, Eq.~\eqref{eqdeg5genh} reduces, on removing the factor  
$9x_1^3-9hx_1^2x_2-18(h+2)x_1x_2^2-8(h+3)x_2^3$,
 to the following quadratic equation:
\begin{multline}
\{1053\psi_0(m_i) \psi_3(m_i)+157464\}x_1^2+\{234\psi_1(m_i) \psi_3(m_i)+314928\}x_1x_2\\
+\{1053\psi_0(m_i) \psi_3(m_i)+13(h+3)\psi_2(m_i)\psi_3(m_i)+209952\}x_2^2=0, \label{redqdcubicex}
\end{multline}
where
\begin{equation}
\begin{aligned}
\psi_0(m_i)&=-(49h+148)m_1^2-2m_1m_2+2m_1m_3+(36h+107)m_2^2\\
& \quad \quad -2m_2m_3-(h+4)m_3^2,\\
\psi_1(m_i)&=-9m_1^2+(324h+954)m_1m_2+(392h^2+2351h+3507)m_1m_3\\
& \quad \quad -9m_2^2+(648h^2+3230h+3840)m_2m_3+(9h^2+26h-12)m_3^2,\\
\psi_2(m_i)&=6885m_1^2+(11664h+23310)m_1m_3-2997m_2^2\\
& \quad \quad +(162h-18)m_2m_3+(8447h^2+27839h+18492)m_3^2,\\
\psi_3(m_i)&=81m_1^3-81hm_1^2m_2+81(h^2+4h+8)m_1^2m_3\\ & \quad \;\;-162(h+2)m_1m_2^2 
+(162h^2+540h+648)m_1m_2m_3+(180h^2\\ & \quad \;\;
+864h+1296)m_1m_3^2-72(h+3)m_2^3+72(h+3)hm_2^2m_3\\ & \quad \;\;+144(h+3)(h+2)m_2m_3^2+64(h+3)^2m_3^3.
\end{aligned}
\label{defpsicubicex}
\end{equation}

We will now show that for infinitely many integer values of $h$, there are an arbitrarily large number of rational points on the projective variety defined by the  simultaneous equations \eqref{lineqncubicex} and \eqref{eqdeg5genh}.  

We first consider the special case when $h=-3$.   It is readily seen that when $h=-3$,  the coefficients of $x_1^2$ and $x_2^2$ in Eq.~\eqref{redqdcubicex} differ by a constant and hence they cannot vanish simultaneously for any values of $m_i$. Thus, condition (ii) of property $ {\bf D_3^{\prime}}$ is satisfied. We now  assume that in this special case a point $P$ on the variety is given by $(1,\,1,\,0,\,\alpha_4,\,\alpha_5,\,\alpha_6)$ where $\alpha_4,\,\alpha_5 $ and $\alpha_6$  are arbitrary rational numbers.  Starting from the point $P$, we may apply the $RC$ operation repeatedly  to obtain a sequence of  rational points $P,\,(RC)P,\,(RC)^2P,\,$ $ \ldots\,$, all of which lie on the variety. The points $(RC)P$ and $(RC)^2P$ of the sequence are given by
\[
(-54,\,135,\,0,\,-162\alpha_4,\,144\alpha_4-90\alpha_5-72\alpha_6,\,-36\alpha_4+36\alpha_5+18\alpha_6)\]
and
\[
(24,\, 24,\,0,\,-288\alpha_4,\, 481\alpha_4+50\alpha_5+52\alpha_6,\,-169\alpha_4-26\alpha_5-28\alpha_6)
\]
respectively. The point $(RC)^2P$ may be written equivalently as $(1,\,1,\,0,\,\alpha_4^{\prime},\,$ $ \alpha_5^{\prime},\,\alpha_6^{\prime})$ where 
\[\alpha_4^{\prime} =-12\alpha_4,\, \alpha_5^{\prime}=(481\alpha_4+50\alpha_5+52\alpha_6)/24,\,\alpha_6^{\prime}=-(169\alpha_4+26\alpha_5+28\alpha_6)/24.\]
  Since $\lvert \alpha_4^{\prime} \rvert > \lvert \alpha_4 \rvert $, it follows that in the special case when $h=-3$, starting from the known point $P_0$ given by $(1,\,1,\,0,\,6,\,-7,\,0)$ and repeatedly applying the $RC$ operation, we  get an infinite sequence of distinct rational points on the variety defined by  Eq.~\eqref{lineqncubicex} and Eq.~\eqref{eqdeg5genh}. The first four points of this sequence are as follows: $(1,\,1,\,0,\, 6,\,-7,\,0)$, $(-18,\,45,\,0,\,-324,\, 498,\,-156)$, $(3,\, 3,\,0,\, -216,\, 317,\, -104)$ and $(-18,\, 45,\,0,\,  3888,\,-5794,\, 1924)$. 
	
	We now revert to the projective variety defined by Eq.~\eqref{lineqncubicex} and Eq.~\eqref{eqdeg5genh} when $h$ is an arbitrary integer. We will first show that there do not exist values of $m_i$ for which Eq.~\eqref{redqdcubicex} is identically satisfied for all values of $x_1,\,x_2$. Accordingly, we consider the following three equations obtained by equating to 0 the coefficients of $x_1^2,\,x_1x_2$ and $x_2^2$ in Eq.~\eqref{redqdcubicex}:
	\begin{align}
	1053\psi_0(m_i) \psi_3(m_i)+157464&=0, \label{cf1cubic}\\
	234\psi_1(m_i) \psi_3(m_i)+314928&=0,\label{cf2cubic}\\
	1053\psi_0(m_i) \psi_3(m_i)+13(h+3)\psi_2(m_i)\psi_3(m_i)+209952&=0 \label{cf3cubic}
	\end{align}
	For these three equations to be satisfied, it is clear that $\psi_3(m_i)$ cannot be 0. Now multiplying Eq.~\eqref{cf1cubic} by 2 and subtracting Eq.~\eqref{cf2cubic}, we get, on removing the factors $(h+3)\psi_3(m_i)$,
	\begin{multline}
	441m_1^2+324m_1m_2+(392h+1175)m_1m_3-324m_2^2\\
	+(648h+1286)m_2m_3+(9h+8)m_3^2=0.\quad \quad \quad  \label{cf12cubic}
	\end{multline}
	Similarly, on multiplying Eq.~\eqref{cf1cubic} by 4 and Eq.~\eqref{cf3cubic} by 3, and taking the difference, we get, 
	\begin{multline}
	(8208h+24651)m_1^2+54m_1m_2+(11664h^2+58302h+69984)m_1m_3\\
	-(3969h+11880)m_2^2+18h(9h+26)m_2m_3\\
	+(8447h^3+53180h^2+102036h+55584)m_3^2=0. \label{cf13cubic}
	\end{multline}
	
	On eliminating $h$ between Eqs.~\eqref{cf12cubic} and \eqref{cf13cubic}, we get,
	\begin{multline}
	391465164951m_1^6+(1055469390948m_2+951172925787m_3)m_1^5\\
	+(397900362636m_2^2+803073780180m_2m_3+700636785345m_3^2)m_1^4\\
	-(1072269163872m_2^3+2176139543544m_2^2m_3+1060172093268m_2m_3^2\\
	+84790818585m_3^3)m_1^3-(856912684464m_2^4+1942353345168m_2^3m_3\\
	+2331720271521m_2^2m_3^2+1181106809600m_2m_3^3+31160975585m_3^4)m_1^2\\
	+(197168862528m_2^5+1012778109936m_2^4m_3+820687816188m_2^3m_3^2\\
	+196155128069m_2^2m_3^3+64601546074m_2m_3^4+1729745180m_3^5)m_1\\
	+241656851520m_2^6+931403339136m_2^5m_3+1368728626356m_2^4m_3^2\\
	+976907330410m_2^3m_3^3+77122752312m_2^2m_3^4\\
	+1429695768m_2m_3^5-708224m_3^6=0. \label{rescf123}
	\end{multline}
	Now Eq.~\eqref{rescf123} represents a curve of genus 4 and it has only finitely many solutions for $m_1,\,m_2,\,m_3$ and it follows from Eq.~\eqref{cf12cubic} that there are only finitely many values of $h$ for which there exist rational values of $m_1,\,m_2,\,m_3$. such that all the three coefficients of Eq.~\eqref{redqdcubicex} vanish simultaneously. Thus when $h$ is arbitrary, we cannot find values of $m_i$ such that the three coefficients of Eq.~\eqref{redqdcubicex} vanish simultaneously. We have thus shown that condition (ii) of property ${\bf D_3^{\prime}}$ is satisfied.
	
Thus when $h$ is arbitrary, starting from the rational point $P_0$, we can  apply the $RC$ operation $n$ times where $n$ is any arbitrary positive integer howsoever large, and obtain a sequence of rational points $P_0,\,P_1,\,P_2,\,\ldots,P_n$  on the variety defined by Eq.~\eqref{lineqncubicex} and Eq.~\eqref{eqdeg5genh}. The coordinates of these points are rational functions of $h$. Now following the same argument as was used in Section 3.2, it follows that there exist infinitely many integer values of $h$ for which  there are an arbitrarily large number of rational points on the projective variety defined by Eqs.~\eqref{lineqncubicex} and \eqref{eqdeg5genh}.
	
	Since Eq.~\eqref{eqdeg5genh} does not contain $x_3$, we have, in effect, shown that for infinitely many values of $h$, the quintic equation in five variables, given by \eqref{eqdeg5genh}, has an arbitrarily large number of solutions.

For any arbitrary value of $h$, we note that the ratio
\begin{equation}
\frac{C(x_4,\,x_5,\,x_6)}{C(x_1,\,x_2,\,x_3)}=C(m_1,\,m_2,\,m_3), \label{eqratcubicfms}
 \end{equation}
is the same for any point $P$ and its conjugate $C(P)$ (since the values of $m_1,\,m_2,\,m_3$ are the same for both $P$ and $C(P)$). Further,  for all rational points $P$ on the variety defined by Eqs.~\eqref{eqdeg5} and \eqref{lineqncubicex}, we have $x_3(P)=0$, and hence the values of both $C(x_1,\,x_2,\,x_3)$ and $C(x_4,\,x_5,\,x_6)$ are the same for any point $P$ and its reflection $R(P)$. Thus, the value of the ratio  $C(x_4,\,x_5,\,x_6)/C(x_1,\,x_2,\,x_3)$ is the same for all points of the  sequence $P_0,\,P_1,\,P_2,\,\ldots$. For the point $P_0$, this ratio is 8, and hence all points of the aforementioned sequence also satisfy the equation
\begin{equation}
8C(x_1,\,x_2,\,x_3)=C(x_4,\,x_5,\,x_6). \label{eqncubicrat}
\end{equation}
Eq.~\eqref{eqncubicrat} reduces, on using Eq.~\eqref{lineqncubicex}, to the following equation:
\begin{multline}
648x_1^3-648hx_1^2x_2-1296(h+2)x_1x_2^2-576(h+3)x_2^3-81x_4^3\\
+81hx_4^2x_5-81(h^2+4h+8)x_4^2x_6+162(h+2)x_4x_5^2\\
-54(3h^2+10h+12)x_4x_5x_6-36(5h^2+24h+36)x_4x_6^2+72(h+3)x_5^3\\
-72h(h+3)x_5^2x_6-144(h+2)(h+3)x_5x_6^2-64(h+3)^2x_6^3=0.\label{eqratcubicfmsex1} 
\end{multline}

We have thus shown that for infinitely many values of $h$, the simultaneous diophantine equations \eqref{eqdeg5genh} and \eqref{eqratcubicfmsex1} have an arbitrarily large number of  solutions. When $h=1$, the first three solutions of the simultaneous equations \eqref{eqdeg5genh} and \eqref{eqratcubicfmsex1} are given by $(1,\,1,\,0,\, 6,\,-7,\,0)$, 
\[(-368765338,\,  605494801,\,  0, \, -297321236,\, -366427558,\, 715340340)\]
and 
\[\begin{aligned}
(&47588476550214311358184089744533539572969269,\\
&-17041771134882562673048141413821106318848779,\,\\
 &0,\, 89001945291963614347958963002771027507707960, \\
& -104068631557829167473628000790179775910721261,\, \\
 &40516749860848893002398908535963291149411960).
\end{aligned}
\]

It follows from Eqs.~\eqref{eqratcubicfms} and \eqref{eqncubicrat} that the values of $m_1,\,m_2,\,m_3$ corresponding to all the rational points $P_0,\,P_1,\,P_2,\,\ldots,\,$ satisfy the condition $ C(m_1,\,m_2,\,m_3)=8$. This condition may be written as follows:
\begin{equation}
\psi_3(m_i)=648. \label{eq1mcubicex}
\end{equation}

Further, the values of $m_1,\,m_2,\,m_3$ corresponding to all the rational points of the infinite sequence, $P_0,\,P_1,\,P_2,\,\ldots\,$, must also satisfy the condition  that the discriminant of Eq. \eqref{redqdcubicex} is  a perfect square. On using the relation \eqref{eq1mcubicex}, this condition may be written as follows:
\begin{multline}
 -13689\psi_0^2(m_i)-169(h+3)\psi_0(m_i)\psi_2(m_i)+169\psi_1^2(m_i)\\
-7371\psi_0(m_i)+702\psi_1(m_i)-39(h+3)\psi_2(m_i)-243=z^2, \label{eq2mcubicex}
\end{multline}
where $z$ is some rational number. 

It follows that there exist infinitely many integer values of $h$ for which the simultaneous equations \eqref{eq1mcubicex} and \eqref{eq2mcubicex} have an arbitrarily large number of rational solutions for $m_1,\,m_2,\,m_3$ and $z$. When $h=1$, the  first two solutions of Eqs.~\eqref{eq1mcubicex} and \eqref{eq2mcubicex}, corresponding to the  first two solutions of Eqs.~\eqref{eqdeg5genh} and \eqref{eqratcubicfmsex1}, are given below:
\[(m_1,\,m_2,\,m_3,\,z)=(50/43,\, 0,\,-117/86,\,13756545/86),
\]
\[
\begin{aligned}
m_1&= -5977631151469496370601034/1446700126228932448001123,\\
 m_2& = 3678131939037546714081930/1446700126228932448001123,\\
 m_3& = 1223704797702532325384490/1446700126228932448001123,\\
z&=105463580688578364176884811517/1446700126228932448001123.
\end{aligned}
\]
The next solution of Eqs.~\eqref{eq1mcubicex} and \eqref{eq2mcubicex}, corresponding to the third solution of Eqs.~\eqref{eqdeg5genh} and \eqref{eqratcubicfmsex1},  involves integers with more than 132 digits and is omitted.

\section{Concluding remarks}
We have shown in this paper that there exist quartic and sextic surfaces, defined by  equations of type \eqref{quarteqngen} and \eqref{sexteqngen} respectively, on which we can find  infinitely many integer points by a new iterative method. It is, however, not easy to determine whether the aforesaid method  can be applied to obtain integer solutions of a  specific  equation of degree 4 or 6 in four variables. It would be of interest to find  criteria by which it is possible to decide whether   the method of this paper is applicable to a given  equation.

We have also shown that there exist projective varieties, defined by equations in several variables, on which there are an arbitrarily large number of integer points. In all such cases, we were able to find these integer points without  finding a curve of genus 0 or 1 on the projective variety under consideration. 

While we have constructed  a few  examples of surfaces and projective varieties defined by high degree equations  and on which there are an arbitrarily large number of  integer points,  it appears that  more general examples of such  surfaces and projective varieties  can be constructed.

The crucial question is whether or not there exists a curve of genus 0 or 1 on  the surfaces and  projective varieties on which we have found an arbitrarily large number of integer points. It would be of considerable interest if it could be proved that a curve of genus 0 or 1 does not lie on these surfaces or projective varieties.  It also needs to be determined whether  these surfaces and varieties are of sufficiently general type.

In the light of the examples already given in this paper and in view of the various possibilities that arise from the  general  method described in Sections 2 and 6, it appears that there may exist  projective varieties, defined by high degree equations, on which there are an arbitrarily large number of integer points and on which  a curve of genus 0  or 1 does not exist.    

\begin{center}
\Large
Acknowledgments
\end{center}
 
I wish to  thank the Harish-Chandra Research Institute, Allahabad for providing me with all necessary facilities that have helped me to pursue my research work in mathematics.

\noindent Postal Address: Ajai Choudhry, 13/4 A Clay Square, \\
\hspace*{1.1in} Lucknow - 226001,\\
\hspace*{1.1in}  INDIA \\
\medskip
\noindent e-mail address: ajaic203@yahoo.com

\end{document}